\newtheorem{theorem}{Theorem}
\newtheorem{proposition}{Proposition}
\newtheorem{lemma}{Lemma}
\newtheorem{corollary}{Corollary}
\DeclareMathOperator{\GL}{GL}
\DeclareMathOperator{\F}{\mathbb{F}}
\title{Random unipotent Sylow subgroups of groups of Lie type of bounded rank}
\author{Saveliy V. Skresanov}
\date{}
\begin{document}
\maketitle

\begin{abstract}
	In 2001 Liebeck and Pyber showed that a finite simple group of Lie type is a product of \( 25 \) carefully chosen
	unipotent Sylow subgroups. Later, in a series of works it was shown that \( 4 \) unipotent Sylow subgroups suffice.
	We prove that if the rank of a finite simple group of Lie type \( G \) is bounded,
	then \( G \) is a product of \( 11 \) random unipotent Sylow subgroups with probability tending to \( 1 \) as \( |G| \) tends to infinity.
	An application of the result to finite linear groups is given. The proofs do not depend on the classification of finite simple groups.
\end{abstract}

\section{Introduction}

It is often useful to express a group as a product of certain subgroups,
the \( BNB \)-decomposition of groups of Lie type being one famous example.
Motivated by a model-theoretic result of Hrushovski and Pillay~\cite[Proposition~4.3]{hrushovskiPillay}
and applications to arithmetic groups, Liebeck and Pyber proved~\cite[Theorem~B]{liebeckPyber}
that a finite simple group of Lie type is a product of \( 25 \) unipotent Sylow subgroups;
recall that a Sylow subgroup is \emph{unipotent} if it is a \( p \)-subgroup, where \( p \) is the defining characteristic of the group of Lie type in question.
Since then the problem of reducing the number of unipotent Sylow subgroups in such a decomposition received much attention.
In~\cite{babaiNP} it was announced that a finite simple group of Lie type is a product of \( 5 \) unipotent Sylow subgroups,
in~\cite{vavilovSS} it was shown that for not-twisted finite simple groups of Lie type only \( 4 \) unipotent Sylow subgroups are required,
and, finally, Garonzi, Levy, Mar\'oti, Simion~\cite{marotiUUUU} and, independently, Smolensky~\cite{smolensky} proved
that \( 4 \) unipotent Sylow subgroups are enough for all finite simple groups of Lie type.
To state the result in the form it was proved in~\cite{marotiUUUU}, recall that a finite simple group of Lie type
contains a pair of \emph{opposite} unipotent Sylow subgroups (corresponding to groups of upper triangular and lower triangular
matrices for matrix groups), see Section~\ref{rand} for more details.

\begin{proposition}[{\cite{marotiUUUU}}]\label{uuuu}
	Let \( G \) be a finite simple group of Lie type.
	Let \( U \) and \( V \) be opposite unipotent Sylow subgroups of \( G \). Then \( G = UVUV \).
\end{proposition}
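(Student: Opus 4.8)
The plan is to use the Bruhat decomposition to reduce \( G=UVUV \) to two statements about the torus and the Weyl representatives, and then to settle those — uniformly for classical groups by a Gaussian‑elimination‑style argument, and case by case for the exceptional and twisted ones. Recall that \( G \) has a split \( (B,N) \)-pair with \( B=HU \) (here \( U \) is the given unipotent Sylow subgroup and \( H=B\cap N \)), with opposite unipotent subgroup \( V=U^{-} \), longest Weyl element \( w_0 \), and Bruhat decomposition \( G=\bigsqcup_{w\in W}B\dot wB \); since \( 1\in U\cap V \) the set \( UVUV \) contains \( U,V,UV,VU,UVU,VUV \), is stable under left multiplication by \( U \) and right multiplication by \( V \), and conjugation by \( \dot w_0 \) turns it into \( VUVU \) (so \( U \) and \( V \) may be interchanged in any argument). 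It therefore suffices to place each Bruhat cell inside \( UVUV \). Using \( B\dot wB=U\dot wB=U\dot wHU \) together with the set‑theoretic factorisation \( \dot wU\dot w^{-1}=A_wC_w \), in which \( A_w\subseteq U \) collects the root subgroups \( U_{w\alpha} \) with \( w\alpha>0 \) and \( C_w\subseteq V \) those with \( w\alpha<0 \) (equality being forced by cardinalities), one gets \( B\dot wB=UC_w\dot wH\subseteq UV\dot wH \). So the whole statement reduces to: \( \dot wH\subseteq VUV \) for every \( w\neq 1 \) (whence \( B\dot wB\subseteq UV\cdot VUV=UVUV \)), together with \( H\subseteq UVUV \), coming from the cell \( w=1 \).

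Both reduced statements are, via the root datum, statements inside the rank‑one subgroups \( L_\alpha=\langle U_\alpha,U_{-\alpha}\rangle \), and I would prove them with the Chevalley relations. The two expressions \( n_\alpha(t)=x_\alpha(t)x_{-\alpha}(-t^{-1})x_\alpha(t)=x_{-\alpha}(-t^{-1})x_\alpha(t)x_{-\alpha}(-t^{-1}) \) show that the monomial elements of \( L_\alpha \) lie in \( U_\alpha U_{-\alpha}U_\alpha \) and in \( U_{-\alpha}U_\alpha U_{-\alpha} \) simultaneously; the identity \( h_\alpha(t)=n_\alpha(t)n_\alpha(-1) \) together with an explicit four‑term identity realising \( \mathrm{diag}(t,t^{-1}) \) as a product (upper)(lower)(upper)(lower) of unitriangular matrices takes care of \( H_\alpha \); and the commutator relations \( [x_\alpha(u),x_\beta(v)]=\prod x_{i\alpha+j\beta}(c_{ij}u^{i}v^{j}) \) let one carry the residual root subgroups of \( A_w,C_w \) past \( \dot w \) and merge them into the four available slots. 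For the classical groups I would instead run this uniformly as Gaussian elimination: given \( w\neq 1 \) and \( h\in H \), choose lower‑unitriangular \( v_1,v_2 \) so that \( v_1^{-1}\dot whv_2^{-1} \) becomes upper unitriangular — which is exactly \( \dot wh\in VUV \) — by solving a triangular system of scalar equations whose only degenerate loci are avoidable; and, likewise, choose unitriangular \( u_i,v_i \) expressing a general \( h\in H \) as \( u_1v_1u_2v_2 \). The exceptional and small twisted groups are then finished by hand from their Chevalley generators.

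I expect the real difficulty to be keeping the number of factors equal to four, and it is concentrated in the torus: a nontrivial diagonal element lies in neither \( UVU \) nor \( VUV \), only in \( UVUV \); sorting a general element's root‑subgroup expansion by sign through the commutator relations naturally produces long alternating products of \( U \)'s and \( V \)'s, and collapsing these to \( UVUV \) requires the precise left‑\( U \)/right‑\( V \) absorption of \( UVUV \) together with a careful choice of which torus‑ and \( \dot w \)-pieces to hide in which slot. The elementary fact that \( B\cap V=1 \), so that \( B \) contains no coset of \( V \), is what makes it possible to absorb a diagonal element after a sufficiently generic right multiplication by \( V \) — in the matrix picture this is precisely the degree of freedom that makes the triangular systems above solvable. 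A second, unavoidable, source of work is the list of small and twisted base cases — \( A_1(q) \), \( {}^{2}A_2(q) \), \( {}^{2}B_2(q) \), \( {}^{2}G_2(q) \), \( {}^{2}F_4(q) \), \( {}^{3}D_4(q) \) and the twisted classical groups — where the unipotent radical need not be abelian and the rank‑one identities above must be re‑established inside the appropriate (possibly Suzuki or Ree) rank‑one sections.
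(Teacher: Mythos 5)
A preliminary remark: the paper does not prove Proposition~\ref{uuuu} at all --- it is imported from \cite{marotiUUUU} and used as a black box (notably in the proof of Proposition~\ref{toffoliGen}) --- so your proposal can only be judged on its own merits, and as it stands it has a genuine gap.

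The gap is the central reduction claim that \( \dot w H \subseteq VUV \) for every \( w \neq 1 \). This is false. Take \( G = \mathrm{SL}_3(q) \), let \( w = s_{\alpha_1} \) with monomial representative \( \dot w \) supported on the first two coordinates, and let \( h = \mathrm{diag}(t_1,t_2,t_3) \) with \( t_3 \neq 1 \). The third column of \( \dot w h \) is \( t_3 e_3 \), so for all lower unitriangular \( v_1, v_2 \in V \),
\[ (v_1\, \dot w h\, v_2)_{33} \;=\; \sum_{j,k} (v_1)_{3j}\,(\dot w h)_{jk}\,(v_2)_{k3} \;=\; (v_1)_{33}\, t_3\, (v_2)_{33} \;=\; t_3 \neq 1, \]
hence \( V \dot w h V \cap U = \emptyset \) and \( \dot w h \notin VUV \); choosing \( t_3 \) with \( t_3^3 \neq 1 \), the failure survives every central adjustment and so persists in the simple group \( \mathrm{PSL}_3(q) \). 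In other words, the difficulty you correctly identify for the cell \( w = 1 \) (a nontrivial torus element needs all four factors) is not confined to that cell: whenever \( w \neq w_0 \), the element \( \dot w h \) carries a residual torus part in the directions on which \( \dot w \) acts diagonally, and (as the computation shows) such diagonal entries can be invariants of the double coset \( V(\dot w h)V \) rather than ``avoidable degenerate loci'' of your triangular system --- so the Gaussian-elimination step for classical groups cannot work as stated. Consequently the cell-by-cell scheme \( B\dot wB \subseteq UV\dot wH \subseteq UV\cdot VUV \) collapses for all cells except essentially the big one, and the real work --- proving \( U C_w \dot w H \subseteq UVUV \) by distributing the torus part, the \( C_w \)-part and the Weyl representative over all four slots simultaneously --- is exactly the nontrivial content of \cite{marotiUUUU} (and of \cite{vavilovSS, smolensky}), not a routine consequence of the rank-one Chevalley identities you list. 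The sound parts of your sketch are the sufficiency direction of the reduction, the rank-one identities \( n_\alpha(t) \in U_\alpha U_{-\alpha}U_\alpha \cap U_{-\alpha}U_\alpha U_{-\alpha} \), and the four-factor factorization of \( \mathrm{diag}(t,t^{-1}) \); the missing idea is how to treat \( \dot w h \) for the many \( w \neq w_0 \).
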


In general, this result is best possible as there are finite simple groups of Lie type which are not a product of \( 3 \)
unipotent Sylow subgroups~\cite{marotiUUUU}.

In the mentioned results~\cite{liebeckPyber, babaiNP, vavilovSS, marotiUUUU, smolensky} the group \( G \) is a product of specifically chosen unipotent Sylow subgroups,
usually \( G = UVUV \cdots UV \), where \( U \) and \( V \) are opposite unipotent Sylow subgroups.
It is often the case with finite simple groups that certain existence results hold for random elements of the group,
for example, it is well-known~\cite{liebeckShalev} that every finite simple group \( G \) is generated by a pair of random elements with probability
tending to \( 1 \) as \( |G| \) tends to infinity.

It is therefore natural to ask whether one can obtain a ``randomized'' decomposition of a finite simple group of Lie type
into a product of unipotent Sylow subgroups. To this end, Pyber posed the following question (personal communication):
\smallskip

\noindent
\textbf{Question.}
\emph{Does there exist a universal constant \( C \) such that a finite simple group of Lie type \( G \)
is a product of \( C \) random unipotent Sylow subgroups with probability tending to \( 1 \) as \( |G| \) tends to infinity?}
\smallskip

Note that since all unipotent Sylow subgroups are conjugate, we can reformulate Pyber's question for a fixed unipotent Sylow subgroup as
\[ \lim_{|G| \to \infty} \frac{|\{ (g_1, \dots, g_C) \in G \times \cdots \times G \mid G = U^{g_1} \cdots U^{g_C} \}|}{|G|^C} = 1, \]
where \( G \) in the limit ranges over all finite simple groups of Lie type and \( U \) is some unipotent Sylow subgroup of~\( G \).

The main result of this paper is a positive answer to this question in the case of finite simple groups of Lie type of bounded rank.
Recall that a finite group \( G \) is called \emph{quasisimple}, if \( G \) is perfect and the quotient of \( G \) by its center \( Z(G) \) is a simple group.

\begin{theorem}\label{main}
	Let \( G \) be a finite quasisimple group, where \( G/Z(G) \) is a simple group of Lie type of bounded rank over the field of characteristic~\( p \).
	Then \( G \) is a product of \( 11 \) random Sylow \( p \)-subgroups with probability tending to \( 1 \) as \( |G| \) tends to infinity.
\end{theorem}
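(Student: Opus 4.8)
The plan is to realise the decomposition first for the simple quotient \(\bar G := G/Z(G)\) using only four random Sylow \(p\)-subgroups, and then to spend seven further factors absorbing the bounded centre \(Z(G)\); the constant \(11\) is a comfortable overestimate. Since the limit is over \(|G|\to\infty\) we may assume \(|G|\) is large, so that \(|Z(G)|\) is coprime to \(p\) (the exceptional Schur multipliers that could contribute \(p\)-torsion occur for only finitely many \(\bar G\)). Then a Sylow \(p\)-subgroup \(U\) of \(G\) maps isomorphically onto a Sylow \(p\)-subgroup \(\bar U\) of \(\bar G\), with \(\overline{U^{g}}=\bar U^{\bar g}\) for every \(g\), and a random Sylow \(p\)-subgroup of \(G\) is \(U^{g}\) for \(g\) uniform in \(G\). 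Thus it suffices to find random conjugates of \(U\) whose product is \(G\).

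The deterministic core is a lemma: \emph{if \(B_1,B_2,B_3,B_4\) are Borel subgroups of a finite simple group of Lie type with \(B_1\) opposite \(B_2\), \(B_2\) opposite \(B_3\), and \(B_3\) opposite \(B_4\), then \(R_u(B_1)R_u(B_2)R_u(B_3)R_u(B_4)=G\).} Here ``opposite'' means intersecting in a maximal torus, and the unipotent radicals of a pair of opposite Borels form a pair of opposite unipotent Sylow subgroups as in Proposition~\ref{uuuu}. Writing \(U_i=R_u(B_i)\) and using that the Borels opposite a fixed \(B\) form a single free \(R_u(B)\)-orbit under conjugation: since \(B_1,B_3\) are both opposite \(B_2\), we have \(B_3=B_1^{v}\) with \(v\in U_2\), so \(U_3=U_1^{v}\) and \(U_1U_2U_3=U_1(U_2v^{-1})U_1v=(U_1U_2U_1)v\); similarly \(B_4=B_2^{w}\) with \(w\in U_3=U_1^{v}\), say \(w=v^{-1}w'v\) with \(w'\in U_1\). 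Substituting and repeatedly using \(U_2v^{-1}=U_2\), \(vU_2=U_2\) and \(U_1(w')^{-1}=U_1\) reduces the product to \((U_1U_2U_1U_2)w'v\), which equals \(G\) by Proposition~\ref{uuuu}.

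For the probabilistic step, fix \(B\) and let \(g\) be uniform in \(G\). The number of \(g\) with \(B^{g}\) opposite \(B\) equals \(|R_u(B)|\cdot|B|\) (there are \(|R_u(B)|\) Borels opposite \(B\), and \(\{g:B^{g}=B'\}\) is a coset of \(N_G(B)=B\)), so this probability is \(|R_u(B)|/[G:B]\); as \([G:B]\) is a polynomial in \(q\) with leading term \(|R_u(B)|=|G|_p\) and boundedly many lower-degree terms, it tends to \(1\) when the rank is bounded. Hence for independent uniform \(g_1,\dots,g_4\) a union bound over the three consecutive pairs shows that \(B^{g_1},\dots,B^{g_4}\) satisfy the lemma's hypotheses with probability tending to \(1\), and applying the lemma in \(\bar G\) gives that \(U^{g_1}U^{g_2}U^{g_3}U^{g_4}\) maps onto \(\bar G\) with probability tending to \(1\). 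The same style of counting, now via the Bruhat decomposition, shows that a product of three random conjugates of \(U\) covers all but an \(o(1)\)-fraction of \(G\) with probability tending to \(1\).

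It remains to absorb the centre. Any \(P\subseteq G\) mapping onto \(\bar G\) has \(Z(G)P=G\), so it is enough to show \(Z(G)\subseteq U^{g_5}\cdots U^{g_{11}}\) with probability tending to \(1\); then \(G=(U^{g_5}\cdots U^{g_{11}})(U^{g_1}\cdots U^{g_4})\), and relabelling the identically distributed variables proves the theorem. Split the seven extra factors as \(4+3\): with probability tending to \(1\) the block \(Q:=U^{g_5}\cdots U^{g_8}\) maps onto \(\bar G\), whence \(|Q|\ge|G|/|Z(G)|\), while \(R:=U^{g_9}U^{g_{10}}U^{g_{11}}\) has \(|R|\ge(1-o(1))|G|\); since \(|Z(G)|\) is bounded, \(|Q|+|R|>|G|\) for \(|G|\) large, so for each \(z\in Z(G)\) the sets \(Q\) and \(zR^{-1}\) meet, giving \(z\in QR\). (The block \(R\) genuinely needs three factors: if \(z=u_1u_2\) with \(u_1,u_2\) unipotent then, as \(z\) is central and commutes with \(u_1\), the element \(u_2=u_1^{-1}z\) has semisimple part \(z\), forcing \(z=1\).) I expect this centre-absorption step, together with making the opposition estimate and the threefold-product size estimate uniform across all types of bounded rank (notably the twisted and Suzuki--Ree families), to be the main technical work; the value \(11\) is surely not optimal.
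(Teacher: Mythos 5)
Your deterministic core is correct, and it is a genuinely different route from the paper's. The simply transitive conjugation action of \( R_u(B) \) on the Borel subgroups opposite \( B \) is standard (it follows from \( B \cap B^{w_0} = H \) and uniqueness in the Bruhat decomposition), and your computation reducing a ``chain-opposite'' quadruple \( U_1U_2U_3U_4 \) to \( U_1U_2U_1U_2\,w'v = G \) via Proposition~\ref{uuuu} checks out; combined with the opposition estimate (which is exactly the paper's Theorem~\ref{randinter}), it even yields something stronger than the paper for the simple quotient, namely that \emph{four} random unipotent Sylow subgroups multiply to \( \overline{G} \) almost surely. The paper never gets exact coverage by so few factors: it instead bounds \( |U^{g_1}U^{g_2}U^{g_3}| \) from below (Toffoli-like decomposition plus the Ruzsa inequality) and finishes with the Babai--Nikolov--Pyber criterion (Lemma~\ref{trick}) applied with five overlapping triples \emph{directly in the quasisimple group} \( G \), using \( e(G) \geq e(\overline{G}) > q^l/3 \) and \( |Z(G)| \leq 48l \) — which is precisely how it avoids any separate centre-absorption step.

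The genuine gap is in your centre absorption. The pigeonhole \( Q \cap zR^{-1} \neq \emptyset \) needs \( |Q| + |R| > |G| \), and since \( |Q| \geq |G|/|Z(G)| \) is all you get from \( Q \) surjecting onto \( \overline{G} \), you must have \( |R| > (1 - 1/|Z(G)|)\,|G| \) for \( R = U^{g_9}U^{g_{10}}U^{g_{11}} \) \emph{as a subset of the quasisimple group} \( G \). But every tool you set up (opposition of Borels, the Bruhat decomposition, Proposition~\ref{uuuu}) is invoked in \( \overline{G} \); transferring through the quotient map only gives \( |R| \geq |\overline{R}| \geq (1-o(1))|\overline{G}| = (1-o(1))|G|/|Z(G)| \), and then \( |Q| + |R| \approx 2|G|/|Z(G)| \leq |G| \) already when \( |Z(G)| = 2 \) (e.g.\ \( G = \mathrm{SL}_2(q) \)), so the counting collapses. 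The fact you need is plausibly true — for \( \mathrm{SL}_2(q) \) one checks directly that \( UVU \) misses \( -I \) yet has size \( (1-o(1))|G| \) — but proving it in general means working inside \( G \) itself: for large \( q \), \( G \) is a central quotient of the simply connected group and carries a split \( (B,N) \)-pair with \( Z(G) \leq H \), and the natural route is \( UVU \supseteq Bw_0B \) with \( |Bw_0B| \sim |G| \); however that containment rests on \( Hw_0 \subseteq UVU \), i.e.\ on a quasisimple analogue of Proposition~\ref{uuuu} (or of the paper's Proposition~\ref{toffoliGen}), which you neither prove nor cite, and which needs checking in particular for the twisted and Suzuki--Ree families. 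So the four-factor statement for \( \overline{G} \) stands, but the step you yourself defer as ``the main technical work'' is a genuinely missing ingredient, not a routine verification; alternatively, it can be bypassed entirely by finishing as the paper does with Lemma~\ref{trick} in \( G \).
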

\begin{corollary}\label{simplecor}
	Let \( G \) be a finite simple group of Lie type of bounded rank. Then \( G \) is a product of \( 11 \) random unipotent Sylow subgroups
	with probability tending to~\( 1 \) as \( |G| \) tends to infinity.
\end{corollary}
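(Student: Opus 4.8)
Corollary~\ref{simplecor} is the special case $Z(G)=1$ of Theorem~\ref{main}, so I will describe a plan for the latter, to be carried out directly inside $G$ (in bounded rank, as $|G|\to\infty$ the defining characteristic $p$ does not divide $|Z(G)|$, so $G$ carries a $(B,N)$-pair, the Bruhat decomposition, and a pair of opposite unipotent Sylow subgroups exactly as a simple group does). Fix a Borel $B=UT$ with unipotent radical $U=U^{+}$ and the opposite Borel $B^{-}=U^{-}T$. Every Sylow $p$-subgroup of $G$ equals $U^{g}$ for some $g$, and $U^{g}$ determines $g$ modulo the coset $gB$, so a random unipotent Sylow subgroup is $U^{g}$ with $g$ uniform in $G$; thus the goal is
\[
 \lim\;\Pr_{g_1,\dots,g_{11}}\bigl[\, U^{g_1}U^{g_2}\cdots U^{g_{11}}=G \,\bigr]=1,
\]
the limit ranging over the bounded-rank family.

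The first step separates the probabilistic content from the geometric one: \emph{two independent random unipotent Sylow subgroups are opposite with probability $\to 1$}. There are $[G:B]$ Borel subgroups, of which exactly $|U|$ are opposite to a fixed one (they are the $(B^{-})^{u}$, $u\in U^{+}$), while by the Bruhat decomposition $[G:B]=\sum_{w}q^{\ell(w)}=|U|\bigl(1+O_r(1/q)\bigr)$ with $r$ the bounded rank; hence a uniform $U^{g}$ is opposite to $U$ with probability $1-O_r(1/q)$. Consequently, for independent uniform $g_1,\dots,g_k$, with probability $\to 1$ each consecutive pair $U^{g_i},U^{g_{i+1}}$ is opposite, and in that case $U^{g_1}\cdots U^{g_k}$ is a two-sided translate of a conjugate of the alternating product $U^{+}U^{-}U^{+}\cdots$ with $k$ factors — one fixed, explicit set.

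The second and decisive step is a deterministic density estimate: \emph{for some absolute constant $m$ the alternating product $\Pi_m:=U^{+}U^{-}U^{+}\cdots$ with $m$ factors has density $1-O_r(1/q)$ in $G$}. Zariski-density of $\Pi_m$ in the ambient algebraic group already for small $m$ follows from a dimension count ($m\cdot\dim U\ge\dim\mathbf G$ once $m\ge 3$, since $\dim U$ exceeds the rank) together with Proposition~\ref{uuuu}: the tuple realizing $G=U\,U^{\dot w_0}U\,U^{\dot w_0}$ shows the relevant multiplication morphism attains full-dimensional image, hence is dominant at a generic tuple. The real work is to pass from the algebraic group to the finite group $G$, i.e.\ to bound the complement of $\Pi_m$ by $O_r(|G|/q)$ points; this amounts, via Lang--Weil, to producing $\mathbb{F}_q$-points on the fibres of the multiplication map over rational points of its image, or equivalently to a count of which Bruhat cells $\Pi_m$ meets (the latter route is classification-free and is the one I would pursue). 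This is the step that fixes the usable value of $m$.

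The conclusion is then short: split $\{g_1,\dots,g_{11}\}$ into two blocks, each of size at least $m$, and let $S_1,S_2$ be the corresponding products; by the first two steps each $S_i$ is, with probability $\to 1$, a subset of $G$ of density $\to 1$, and since the blocks use disjoint random elements both events hold simultaneously with probability $\to 1$. For subsets $A,B$ of a finite group with $|A|+|B|>|G|$ one has $AB=G$, so $U^{g_1}\cdots U^{g_{11}}=S_1S_2=G$ with probability $\to 1$. I expect the main obstacle to be exactly the density estimate of the third paragraph: establishing it uniformly over the bounded-rank family — controlling rational points on the fibres, or carrying out the Bruhat-cell bookkeeping — is the crux, and the slack it introduces is why the constant one actually obtains is $11$ rather than the value one would naively guess from $2\cdot 3$.
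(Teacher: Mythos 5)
Your approach is genuinely different from the paper's. After establishing (as the paper does, via a Bruhat-cell count rather than your Borel-counting argument, but these are the same in substance) that two random unipotent Sylow subgroups are conjugate to an opposite pair almost surely, the paper proceeds via Proposition~\ref{toffoliGen} (\( |UVU|\ge |G|/|W| \)), the Ruzsa triangle inequality (Lemma~\ref{threerand}), and the Babai--Nikolov--Pyber product criterion (Lemma~\ref{trick}), which invokes the minimum degree of a nontrivial complex representation via the Landazuri--Seitz bounds. You instead propose a structural claim: that once every consecutive pair \( (U^{g_i},U^{g_{i+1}}) \) is conjugate to the opposite pair, the whole product \( U^{g_1}\cdots U^{g_k} \) equals a translate of a conjugate of the fixed alternating product \( \Pi_k=U^{+}U^{-}U^{+}\cdots \). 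This is the crux of your argument and you assert it without proof; that is the main gap. The claim is in fact correct --- a short induction, using that the subgroups opposite to \( (U^{\pm})^{h} \) are precisely \( \{ (U^{\mp})^{uh} : u\in U^{\pm} \} \), shows \( U^{g_1}\cdots U^{g_k}=h^{-1}\Pi_k c\, h \) for some \( h,c\in G \) --- but it needs a proof, and it is the only nonstandard input.

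More striking than the gap is that you fail to push your own approach to its conclusion. If the translate claim holds, then combining it with Proposition~\ref{uuuu} (\( U^{+}U^{-}U^{+}U^{-}=G \)) gives \( U^{g_1}U^{g_2}U^{g_3}U^{g_4}=h^{-1}Gch=G \) whenever the three consecutive pairs are conjugate to opposite, which by Theorem~\ref{randinter} and a union bound happens almost surely. That would show that \emph{four} random unipotent Sylow subgroups suffice, a much stronger statement than the paper's \( 11 \). Instead you propose a density estimate for \( \Pi_m \) via Lang--Weil or Bruhat-cell bookkeeping, and a two-block sumset argument; both are unnecessary, since \( \Pi_4=G \) already gives density exactly one, and the two-block step is vacuous once one block is all of \( G \). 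In short: your key step is unjustified as written, and the auxiliary machinery you wrap around it obscures the fact that, once the key step is proved, it yields a result considerably stronger than the one you set out to prove.
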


The main idea of the argument is that although we cannot arrange our product of random Sylow subgroups to be of the form \( UVUV \cdots UV \)
for opposite unipotent Sylow subgroups \( U \) and \( V \), it turns out that two random unipotent Sylow subgroups are conjugate to a pair of opposite
subgroups almost surely (Theorem~\ref{randinter}). That can be used to show that the product of three random Sylow subgroups has large size (Lemma~\ref{threerand})
and then a criterion of Babai, Nikolov and Pyber applies (Lemma~\ref{trick}).

In~\cite[Proposition~4.3]{hrushovskiPillay} Hrushovski and Pillay proved that a finite subgroup of \( \GL_n(p) \),
which is generated by elements of order \( p \), can be written as a product of \( k \) groups of order \( p \),
where \( k \) depends only on~\( n \). In~\cite[Theorem~A]{liebeckPyber} Liebeck and Pyber provided a generalization of that result
and showed that a finite subgroup \( G \) of \( \GL_n(\F) \), where \( \F \) is a field of characteristic \( p \),
such that \( G \) is generated by elements of order \( p \), can be written as a product of \( 25 \) Sylow \( p \)-subgroups,
if \( p \) is large enough in terms of~\( n \). We improve their result and show that for any \( \epsilon > 0 \) the group \( G \) is a product
of \( 11 \) random Sylow \( p \)-subgroups with probability at least \( 1 - \epsilon \), if \( p \) is large enough in terms of \( n \) and \( \epsilon \).

\begin{theorem}\label{lpmain}
	For every \( \epsilon > 0 \) and \( n \geq 1 \) there exists an integer \( f_\epsilon(n) \) such that the following holds.
	Let \( p \) be a prime with \( p \geq f_\epsilon(n) \), and let \( \F \)
	be a field of characteristic~\( p \). If \( G \) is a finite subgroup of \( \GL_n(\F) \) generated by elements of order \( p \),
	then \( G \) is a product of \( 11 \) random Sylow \( p \)-subgroups with probability at least \( 1 - \epsilon \).
\end{theorem}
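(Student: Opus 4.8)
The plan is to deduce Theorem~\ref{lpmain} from Theorem~\ref{main} and the structure theory of finite linear groups generated by unipotent elements, following the skeleton of the proof of~\cite[Theorem~A]{liebeckPyber} but feeding into it the randomized decomposition of quasisimple groups in place of a deterministic one.

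First I would make two harmless reductions. Since \( G \) is finite it is conjugate into \( \GL_n(\F_q) \) for a suitable finite field \( \F_q \) of characteristic \( p \), so we may assume \( \F=\F_q \); and every element of order \( p \) in characteristic \( p \) is unipotent, so the hypothesis says that \( G \) is generated by unipotent elements, i.e.\ \( G=O^{p'}(G) \) has no nontrivial quotient of order prime to \( p \). Next I would quote the structural part of the proof of~\cite[Theorem~A]{liebeckPyber}: there is a function \( g(n) \) such that if \( p\ge g(n) \) then, setting \( U:=O_p(G) \), the quotient \( \overline G:=G/U \) is a central product \( \overline G=L_1\ast\cdots\ast L_t \) of quasisimple groups \( L_i \), each of Lie type in characteristic \( p \), where both the number \( t \) of factors and the ranks of the \( L_i \) are bounded in terms of \( n \). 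This is where the lower bound on \( p \) is used: it forces the non-abelian composition factors of \( G \) to be of Lie type in the defining characteristic (through Landazuri--Seitz type lower bounds on representation degrees), and, since \( G \) is \( p \)-perfect while any outer-automorphism or central \( p' \) contribution to \( \overline G \) would for large \( p \) be a nontrivial \( p' \)-group, it rules out such contributions; moreover this structural input can be arranged without the classification of finite simple groups.

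Now I would push the problem down. Because \( U=O_p(G) \) is normal, it lies in every Sylow \( p \)-subgroup of \( G \) and in all their conjugates, and the quotient map \( G\to\overline G \) is a surjective homomorphism all of whose fibres have size \( |U| \), so it transports the uniform distribution on \( G^{11} \) to the uniform distribution on \( \overline G^{\,11} \) and sends Sylow \( p \)-subgroups onto Sylow \( p \)-subgroups. Hence \( P^{g_1}\cdots P^{g_{11}}=G \) if and only if \( \overline P^{\,\overline g_1}\cdots\overline P^{\,\overline g_{11}}=\overline G \), and the success probabilities for \( G \) and for \( \overline G \) coincide; so it suffices to treat \( \overline G \). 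Fix a Sylow \( p \)-subgroup \( P=P_1\cdots P_t \) of \( \overline G \) with \( P_i \) a Sylow \( p \)-subgroup of \( L_i \). Since each \( L_i \) is normal in \( \overline G \) and centralised by every \( L_j \) with \( j\ne i \), for any \( h\in\overline G \) we have \( P^h=P_1^h\cdots P_t^h \) with \( P_i^h\le L_i \), the conjugate \( P_i^h \) being uniformly distributed over the Sylow \( p \)-subgroups of \( L_i \) when \( h \) is uniform in \( \overline G \); and since elements of \( L_i \) commute with elements of \( L_j \) for \( i\ne j \),
\[ P^{h_1}\cdots P^{h_{11}}=\prod_{i=1}^{t}\bigl(P_i^{h_1}\cdots P_i^{h_{11}}\bigr) \]
for all \( h_1,\dots,h_{11}\in\overline G \). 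Thus \( \overline G=P^{h_1}\cdots P^{h_{11}} \) whenever \( L_i=P_i^{h_1}\cdots P_i^{h_{11}} \) for every \( i \), and for fixed \( i \) this event has exactly the probability that \( L_i \) is a product of \( 11 \) random Sylow \( p \)-subgroups, since \( P_i^{h_1},\dots,P_i^{h_{11}} \) are \( 11 \) independent uniformly random Sylow \( p \)-subgroups of \( L_i \). As \( L_i \) is quasisimple with \( L_i/Z(L_i) \) simple of Lie type of rank bounded in terms of \( n \), and \( |L_i|\to\infty \) as \( p\to\infty \), Theorem~\ref{main} guarantees that for \( p \) large enough in terms of \( n \) and \( \epsilon \) this probability is at least \( 1-\epsilon/t \). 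A union bound over the boundedly many factors then shows that \( \overline G \), hence \( G \), is a product of \( 11 \) random Sylow \( p \)-subgroups with probability at least \( 1-\epsilon \), provided \( p\ge f_\epsilon(n) \) for a suitably chosen \( f_\epsilon(n) \).

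The main obstacle is the first step: one needs on hand the precise structural description of \( G/O_p(G) \) valid for \( p \) large relative to \( n \) --- the quasisimplicity of the factors, their Lie type in the defining characteristic, and the boundedness of both their number and their rank --- together with the verification that this holds without invoking the classification of finite simple groups. Everything downstream of that is bookkeeping with central products and with the fact that the relevant homomorphisms transport the uniform distribution while preserving the count \( 11 \).
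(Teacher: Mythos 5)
Your proposal is essentially correct and follows the same skeleton as the paper: reduce modulo \( O_p(G) \), invoke a CFSG-free structure theorem (Larsen--Pink via Liebeck--Pyber) to write the quotient as a central product of boundedly many quasisimple groups of Lie type of bounded rank, use that \( p \geq f_\epsilon(n) \) and \( G = O^{p'}(G) \) to kill the bounded-index outer part and the abelian \( p' \)-factor, and then apply Theorem~\ref{main} to each quasisimple factor. The one place your bookkeeping departs from the paper's is the last step: the paper lifts \( G/O_p(G) \) to the honest direct product \( G^* = Q_1 \times \cdots \times Q_k \), applies Theorem~\ref{main} there, and transfers the count back via \( L \geq L^*/|Z|^{11} \), whereas you stay inside the central product, argue that each \( P_i^{h_j} \) is a uniform Sylow \( p \)-subgroup of \( L_i \) (independent over \( j \)), and finish with a union bound over the \( t \) factors. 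Both routes are valid: the direct-product lift gives coordinatewise independence for free, while the union bound avoids the lift at the small cost of requesting \( 1 - \epsilon/t \) per factor rather than a product bound. Two small points worth tightening in a write-up: you should note that choosing \( f_\epsilon(n) \geq 3 \) lets one assume \( p > 2 \), which is the hypothesis under which the Larsen--Pink corollary is stated in the paper, and the passage from the structural statement actually available (a bounded-index normal subgroup \( N \) with \( N/O_p(G) \) a central product with an abelian \( p' \)-factor \( A \)) to the cleaner statement you quote requires the two short arguments the paper spells out, namely that \( p \) exceeds \( [G:N] \) forces \( G = N \), and that \( G \) being generated by order-\( p \) elements forces \( A = 1 \).
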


It is an interesting question whether the result of Hrushovski and Pillay can also be proved for random subgroups of order \( p \),
but unfortunately Theorem~\ref{lpmain} does not immediately imply that.

To put our results into a more general perspective, recall the Liebeck--Nikolov--Shalev conjecture,
which was recently proved by Gill, Lifshitz, Pyber and Szab\'o~\cite{glps, lifshitz}:
there exists a universal constant \( C > 0 \) such that for any nonabelian finite simple group \( G \) and a subset \( A \subseteq G \)
with \( |A| \geq 2 \) we have \( G = A^{g_1} \cdots A^{g_k} \) for some elements \( g_1, \dots, g_k \in G \), where \( k \leq C \cdot \log |G| / \log |A| \).
A natural extension of our work would be to prove a randomized analogue of the Liebeck--Nikolov--Shalev conjecture where elements \( g_1, \dots, g_k \)
are chosen randomly, at least for groups of Lie type of bounded rank or for sufficiently large subsets. Note that such a result
would imply the answer to Pyber's question about Sylow subgroups, since for a unipotent Sylow subgroup \( U \) the fraction \( \log |G| / \log |U| \)
is bounded by a universal constant independent of the rank.

The structure of the paper is as follows. In Section~\ref{rand} we provide some preliminary properties of groups of Lie type
and prove that a pair of random unipotent Sylow subgroups is conjugate to a pair of opposite subgroups (Theorem~\ref{randinter}).
In Section~\ref{tdecomp} we establish a certain decomposition of a finite simple group of Lie type, Section~\ref{mainsec} is devoted
to the proof of the main result (Theorem~\ref{main}) and in Section~\ref{appsec} we prove Theorem~\ref{lpmain} about subgroups of linear groups.

\section{Random Sylow subgroups}\label{rand}

We start with some preliminaries. Given two real-valued functions \( f(q) \) and \( g(q) \) we say that \( f \)
and \( g \) are \emph{asymptotically equivalent}, and write \( f \sim g \), if \( \lim_{q \to \infty} f(q)/g(q) = 1 \).
Given some family of finite groups, we will say that an event happens \emph{almost surely}, if the probability of the
event happening tends to \( 1 \) as the size of the group from the family tends to infinity.

Recall that there is a uniform distribution on all elements of a finite group, and on all Sylow \( p \)-subgroups for a fixed prime~\( p \).
Since all Sylow \( p \)-subgroups of a group \( G \) are conjugate, choosing a Sylow \( p \)-subgroup uniformly at random is the same as choosing
an element \( g \in G \) uniformly at random and computing \( U^g \) for some fixed Sylow \( p \)-subgroup \( U \) of~\( G \).
For brevity, we will sometimes write ``random element (subgroup)'' instead of ``uniformly random element (subgroup)''.

We mostly follow~\cite{carter} for our treatment of finite simple groups of Lie type.
Let \( G \) be a finite simple group of Lie type over the field of characteristic~\( p \).
Let \( U \) be a Sylow \( p \)-subgroup of \( G \), and let \( B = UH \) be a Borel subgroup.
Recall that \( G \) is a group with a \( (B, N) \)-pair, and \( W = N/H \) is the Weyl group.
Given some \( w \in W \), one can choose a representative \( w' \in N \) as \( w = Hw' \).
Since \( U \) and \( B \) are normalized by \( H \), we can correctly write \( U^w = U^{w'} \) and \( B^w = B^{w'} \).

Let \( w_0 \) be the long element of the Weyl group \( W \) of \( G \). Then \( w_0 \) is an involution,
\( U \cap U^{w_0} = 1 \) and \( B \cap B^{w_0} = H \), see~\cite[Theorem~2.3.8]{cfsg3}.
The Sylow \( p \)-subgroups \( U \) and \( U^{w_0} \) are called opposite.

If \( G \) is a Chevalley group of rank \( l \) defined over the field of order \( q \),
then by~\cite[Section~8.6]{carter}, we have \( |U| = q^M \), where \( M \) is the number of positive roots,
and \( |H| = \frac{1}{d}(q-1)^l \), where \( d \) is defined in~\cite[Section~8.6]{carter}. By~\cite[Theorem~9.4.10]{carter}, we have
\[ |G| = \frac{1}{d}q^M (q^{d_1}-1) \cdots (q^{d_l}-1), \]
where \( d_1, \dots, d_l \) are the degrees of the basic polynomial invariants of the Weyl group.
By~\cite[Theorem~9.3.4~(ii)]{carter}, we have \( d_1 + \dots + d_l = M+l \).

Note that
\begin{align*}
	|G| &= \frac{1}{d} q^{2M+l}(1 - 1/q^{d_1}) \cdots (1 - 1/q^{d_l}) \sim \frac{1}{d} q^{2M+l},\\
	|H| &= \frac{1}{d} q^l (1-1/q)^l \sim \frac{1}{d}q^l,\\
	|B| &= |U|\cdot |H| \sim \frac{1}{d} q^{M+l}.
\end{align*}

If \( G \) is a twisted group of rank \( l \), then by~\cite[Section~14.1]{carter}, we have \( |U| = q^M \)
and \( |H| = \frac{1}{d}(q - \eta_1)(q - \eta_2) \cdots (q - \eta_l) \), where \( q \), \( M \), \( d \) and \( \eta_i \) are defined in~\cite[Section~14.1]{carter}.
Note that for \( G = {}^2B_2(2^{2m+1}), {}^2G_2(3^{2m+1}), {}^2F_4(2^{2m+1}) \) we define \( q = p^{m+1/2} \) for \( p = 2, 3 \),
following~\cite[Section~14.1]{carter}, although \( q \) is an irrational number in these cases.

By~\cite[Theorem~14.3.1]{carter}, we have
\[ |G| = \frac{1}{d} q^M (q^{d_1} - \epsilon_1) \cdots (q^{d_l} - \epsilon_l), \]
where \( d_i \) and \( \epsilon_i \) are defined in~\cite[Section~14.2]{carter}. Again, by~\cite[Theorem~9.3.4~(ii)]{carter} we have \( d_1 + \dots + d_l = M+l \).

By~\cite[Section~14.3]{carter}, \( \epsilon_i = \pm 1 \) and \( \eta_i = \pm 1 \), and since \( l \) is bounded, we have
\begin{align*}
	|G| &= \frac{1}{d} q^{2M+l}(1 - \epsilon_1/q^{d_1}) \cdots (1 - \epsilon_l/q^{d_l}) \sim \frac{1}{d} q^{2M+l},\\
	|H| &= \frac{1}{d} q^l (1-\eta_1/q) \cdots (1 - \eta_l/q) \sim \frac{1}{d}q^l,\\
	|B| &= |U|\cdot |H| \sim \frac{1}{d} q^{M+l}.
\end{align*}

Observe that with our notation the asymptotic formulae for \( |G|, |H| \) and \( |B| \) are the same for Chevalley groups and twisted groups.

\begin{theorem}\label{randinter}
	Let \( G \) be a finite simple group of Lie type.
	If the rank of \( G \) is bounded, then two randomly chosen unipotent Sylow subgroups of \( G \)
	are conjugate to a pair of opposite subgroups with probability tending to \( 1 \) as \( |G| \) goes to infinity.
\end{theorem}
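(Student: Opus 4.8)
The plan is to reduce the assertion about a \emph{pair} of random unipotent Sylow subgroups to the assertion that a \emph{single} random element of \( G \) lands in the big Bruhat cell \( B w_0 B \). First, since all Sylow \( p \)-subgroups of \( G \) are conjugate and \( N_G(U) = B \), the map \( g \mapsto U^g \) pushes the uniform distribution on \( G \) forward to the uniform distribution on unipotent Sylow subgroups; hence a pair of independent random unipotent Sylow subgroups is distributed as \( (U^{g_1}, U^{g_2}) \) with \( g_1, g_2 \in G \) independent and uniform. Conjugating this pair by \( g_1^{-1} \) turns it into \( (U, U^{h}) \) with \( h = g_2 g_1^{-1} \), and \( h \) is again uniformly distributed on \( G \) (for fixed \( g_1 \), the map \( g_2 \mapsto g_2 g_1^{-1} \) is a bijection of \( G \)). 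Since conjugacy of pairs of subgroups is an equivalence relation, it suffices to show that for uniformly random \( h \in G \) the pair \( (U, U^h) \) is conjugate to \( (U, U^{w_0}) \) — which is a pair of opposite subgroups — with probability tending to \( 1 \).

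Next I would pin down a sufficient condition on \( h \): if \( h \in B w_0 B \), then \( (U, U^h) \) is conjugate to \( (U, U^{w_0}) \). Indeed, write \( h = b_1 w_0' b_2 \) with \( b_1, b_2 \in B \) and \( w_0' \in N \) a fixed representative of \( w_0 \); conjugating the pair \( (U, U^h) \) by \( b_2^{-1} \in B \) and using that \( B = N_G(U) \) normalizes \( U \), one gets
\[ (U^{b_2^{-1}}, U^{h b_2^{-1}}) = (U, U^{b_1 w_0'}) = (U, U^{w_0'}) = (U, U^{w_0}). \]
So it remains to prove that a uniformly random element of \( G \) lies in the double coset \( B w_0 B \) with probability tending to \( 1 \) as \( |G| \to \infty \).

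For the final step I would just estimate \( |B w_0 B| \). Counting the distinct left cosets \( b w_0' B \), \( b \in B \), inside \( B w_0 B \) and using that \( w_0 \) is an involution together with the identity \( B \cap B^{w_0} = H \) recorded above, one has \( |B w_0 B| = |B| \cdot [B : B \cap B^{w_0}] = |B|^2/|H| \). Substituting the asymptotic formulae \( |B| \sim \frac{1}{d} q^{M+l} \), \( |H| \sim \frac{1}{d} q^l \) and \( |G| \sim \frac{1}{d} q^{2M+l} \) — which hold uniformly exactly because the rank of \( G \) is bounded, and which take the same shape for Chevalley and twisted groups — gives
\[ \frac{|B w_0 B|}{|G|} = \frac{|B|^2}{|H|\,|G|} \sim 1, \]
so \( |B w_0 B|/|G| \to 1 \) as \( q \to \infty \), i.e. as \( |G| \to \infty \). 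Hence a random \( h \in G \) lies in \( B w_0 B \) almost surely, and combining with the two previous paragraphs completes the argument.

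I do not expect a genuine obstacle here: the proof is essentially bookkeeping with the Bruhat decomposition together with the order estimates already assembled in this section. The only points demanding a little attention are (a) the two elementary measure-theoretic facts used in the reduction — that \( g \mapsto U^g \) is measure-preserving onto the set of Sylow subgroups and that \( g_2 g_1^{-1} \) is uniform — both immediate from \( N_G(U) = B \); and (b) verifying that \( B \cap B^{w_0} = H \) and the asymptotics for \( |G|, |B|, |H| \) are available for twisted groups and not only for Chevalley groups, which is precisely what the discussion preceding the theorem has arranged (including the conventions for the irrational \( q \) occurring in the Suzuki and Ree families).
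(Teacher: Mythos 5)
Your proof is correct and follows essentially the same approach as the paper's: reduce the statement about a pair of random unipotent Sylow subgroups to the fact that a single random element lands in the big Bruhat cell $Bw_0B$, and then estimate $|Bw_0B|/|G| = |B|^2/(|G|\cdot|H|) \sim 1$ using the asymptotics collected earlier in the section. The only (cosmetic) difference is that you spell out the reduction explicitly via the substitution $h = g_2 g_1^{-1}$ and conjugate the pair by $b_2^{-1}$, whereas the paper states the reduction more tersely and observes that $U = U^{b_2}$ and $U^g = U^{w_0 b_2}$ are simultaneously conjugate to the opposite pair by $b_2$.
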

\begin{proof}
	Let \( q \), \( l \), \( M \), \( N \), \( B \), \( H \) and \( U \) be as we described above,
	and let us identify \( w_0 \in W \) with one of its representatives in \( N \).
	Since all unipotent Sylow subgroups of \( G \) are conjugate, it is enough to prove that a randomly chosen \( g \in G \)
	lies in \( Bw_0B \) with probability tending to~\( 1 \) when \( q \) goes to infinity.
	Indeed, for such \( g \) we have \( g = b_1w_0b_2 \), \( b_1, b_2 \in B \), hence \( U^g = U^{b_1w_0b_2} = U^{w_0b_2} \).
	On the other hand, \( U = U^{b_2} \), so \( U \) and \( U^g \) are conjugate to \( U \) and \( U^{w_0} \).

	Now, to finish the proof, recall that \( B \cap B^{w_0} = H \), so
	\[ \frac{|Bw_0B|}{|G|} = \frac{|B|^2}{|G| \cdot |H|} \sim \frac{\frac{1}{d^2} q^{2M+2l}}{\frac{1}{d} q^{2M+l} \cdot \frac{1}{d} q^l} = 1. \]
	It follows that when \( l \) is bounded, \( |Bw_0B|/|G| \) tends to \( 1 \) as \( q \) tends to infinity, hence the claim is proved.
\end{proof}

As a simple corollary, under the assumptions of Theorem~\ref{randinter}, two random unipotent Sylow subgroups intersect trivially almost surely.

Given a finite simple group of Lie type \( G \) with a Borel subgroup \( B \), the distribution of \( |BgB| \) for random \( g \in G \)
is related to the so-called Mallows measure~\cite{diaconisSimper}. It follows from~\cite[Theorem~3.5]{diaconisSimper} that for \( G = \mathrm{PSL}_n(q) \)
where \( q \) is fixed while \( n \) tends to infinity (a situation opposite to what we study in this paper), two random unipotent Sylow subgroups
intersect nontrivially most of the time.

\section{Toffoli-like decomposition}\label{tdecomp}

In~\cite[Theorem~2.7]{toffoli}, Toffoli showed that any real matrix with determinant \( 1 \) up to some row permutation and sign changes
can be decomposed as a product of an upper unitriangular, lower unitriangular and again an upper unitriangular matrix.
The following result is a generalization of this decomposition for finite simple groups of Lie type and might be already known,
though the author has not been able to find it in the literature.

\begin{proposition}\label{toffoliGen}
	Let \( G \) be a finite simple group of Lie type with a \( (B, N) \)-pair.
	Let \( U \) and \( V \) be opposite unipotent Sylow subgroups of \( G \), and let \( W \) be its Weyl group.
	If \( n_w \), \( w \in W \) is a complete system of representatives of elements of \( W \) in \( N \), then
	\[ G = \bigcup_{w \in W} UVUn_w, \]
	in particular, \( |UVU| \geq |G| / |W| \).
\end{proposition}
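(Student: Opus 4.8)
\emph{Strategy.} The plan is to begin from the Bruhat decomposition, reduce each Bruhat cell to a subset of \( U B^{-} n_w \) (where \( B^{-} = VH = B^{w_0} \) is the Borel subgroup opposite to \( B \)), and then absorb the remaining \( B^{-} \) into \( UVU \) by means of a single key lemma, which plays the role of Toffoli's statement in this setting. Throughout I may use any convenient representatives for the computations, since the cells \( B n_w B \) and the sets \( U n_w B \) and \( U B^{-} n_w \) will turn out not to depend on the choice.

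\emph{Reducing the cells.} Recall \( G = \bigcup_{w \in W} B n_w B \). Since \( H \) is normal in \( N \) and normalizes \( U \) and \( V \), absorbing the torus factors gives \( B n_w B = U H n_w U H = U n_w (HU) H = U n_w U H = U n_w B \), an identity independent of the representative \( n_w \). Put \( U^{(w)} = n_w U n_w^{-1} \); as a product of root subgroups it factors as \( U^{(w)} = (U^{(w)} \cap U)(U^{(w)} \cap V) \), so that \( n_w U = (U^{(w)} \cap U)(U^{(w)} \cap V) n_w \) and hence \( U n_w U = U (U^{(w)} \cap V) n_w \subseteq U V n_w \). Consequently
\[ B n_w B = U n_w U H \subseteq U V n_w H = U V H n_w = U B^{-} n_w . \]

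\emph{The key lemma.} The crucial point is that \( n_{w_0}^{-1} B \subseteq UVU \). Because \( B^{-} \) contains \( H \) and hence absorbs the ambiguity in the representative, one checks that \( n_{w_0}^{-1} B = B^{-} n_{w_0}^{-1} \) and that this is equivalent to \( B^{-} \subseteq UVU \cdot m \) for \emph{every} representative \( m \) of \( w_0 \) in \( N \). I expect this lemma to be the main obstacle. To prove it, observe first that the ``big cell'' \( B w_0 B \) equals \( U V n_{w_0} H \) (the same computation as above, now using \( n_{w_0} U n_{w_0}^{-1} = V \)), so that \( n_{w_0}^{-1} B \subseteq B w_0 B = U V n_{w_0} H \); it then remains to show that for every \( u \in U \) and \( h \in H \) there exist \( u_1, u_2 \in U \) with \( u_1\, n_{w_0}^{-1} u h\, u_2 \in V \). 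I would obtain this by a Toffoli-style elimination that kills the ``torus part'' one root subgroup at a time, organized as an induction on the rank of \( G \) (descending to a Levi subgroup of a maximal parabolic) or on \( \ell(w_0) \); the statement is symmetric under interchanging \( U \) and \( V \), which should keep the induction manageable.

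\emph{Conclusion.} Granting the lemma, for each \( w \in W \) the element \( m_w = n_{w_0 w} n_w^{-1} \) is a representative of \( w_0 \), hence \( B^{-} \subseteq UVU\, m_w \) and therefore
\[ B n_w B \subseteq U B^{-} n_w \subseteq U\,(UVU\, m_w)\, n_w = UVU\, n_{w_0 w} . \]
Taking the union over \( w \in W \) and using that \( w \mapsto w_0 w \) is a bijection of \( W \),
\[ G = \bigcup_{w \in W} B n_w B \subseteq \bigcup_{w \in W} UVU\, n_{w_0 w} = \bigcup_{v \in W} UVU\, n_v \subseteq G , \]
so \( G = \bigcup_{v \in W} UVU n_v \). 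Finally \( |G| = \bigl|\bigcup_{v} UVU n_v\bigr| \le \sum_{v \in W} |UVU n_v| = |W| \cdot |UVU| \), which gives \( |UVU| \ge |G|/|W| \).
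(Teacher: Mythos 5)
Your overall skeleton matches the paper's: start from the Bruhat decomposition, show each cell \( Bn_wB \) lands in \( UVUn_{w'} \) for a suitable representative, and deduce \( |UVU| \geq |G|/|W| \) by counting. The problem is that your ``key lemma'' --- \( n_{w_0}^{-1}B \subseteq UVU \), equivalently \( Hn_{w_0} \subseteq UVU \) --- is not a lemma you prove; it is the entire content of the proposition beyond the Bruhat decomposition, and you leave it at the level of a hope (``I would obtain this by a Toffoli-style elimination \dots\ induction on the rank \dots\ should keep the induction manageable''). Carrying out such an elimination uniformly for all types, including the twisted groups and the Suzuki--Ree groups (where the ``root subgroups'' are products \( X_S \) over \( \sigma \)-orbits of roots and the torus has a different shape), is essentially of the same difficulty as the published factorization theorems; it is not a routine induction, and as written the argument has a genuine gap at exactly the step you yourself flag as the main obstacle. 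A secondary, smaller gap: the factorization \( n_wUn_w^{-1} = (n_wUn_w^{-1}\cap U)(n_wUn_w^{-1}\cap V) \) that you use to get \( Bn_wB \subseteq UB^-n_w \) is standard for untwisted Chevalley groups but needs a separate justification in the twisted case (this is what the paper extracts from Carter's Theorem~8.4.3 and Proposition~13.5.3, including the choice of \( \sigma \)-invariant representatives \( n_w \)).

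The fix is short, and it is exactly what the paper does: your key lemma follows in a few lines from Proposition~\ref{uuuu}, which is available to you. For \( h \in H \) and \( n = n_{w_0} \) one has \( hn \in UVUV \), so \( hn = u\,v_1u_1v_2 \) with \( u \in U \); then \( u^{-1}hn = hn\,(u^{-1})^{hn} \in VUV \), and since \( U^{hn} = V \) this gives \( hn \in VUV \); conjugating once more with \( V^{hn} = U \) yields \( hn \in UVU \), i.e.\ \( Hn \subseteq UVU \), hence \( Bn \subseteq U\cdot UVU = UVU \), which is precisely your \( n_{w_0}^{-1}B \subseteq UVU \) after taking inverses. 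With that substitution (and the twisted-case justification above), your concluding bookkeeping with \( m_w = n_{w_0w}n_w^{-1} \) is correct and recovers the statement. In short: right architecture, but the load-bearing step must either be imported from the known \( G = UVUV \) theorem, as the paper does, or proved in full --- and your sketch does not do the latter.
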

\begin{proof}
	We use the notation of Section~\ref{rand}.
	For brevity, let \( n = n_{w_0} \), so \( V = U^n \). Let \( h \in H \) be some arbitrary element, and recall that \( h \) normalizes \( U \) and \( V \).
	By Proposition~\ref{uuuu}, we have \( hn \in UVUV \).
	Hence there exists \( u \in U \) such that \( hn \cdot u^{hn} = uhn \in VUV \). Since \( u^{hn} \in V \), we obtain \( hn \in VUV \),
	and as \( V^{hn} = U \), we have \( hn \in UVU \).

	Since \( h \in H \) was arbitrary, we have \( Hn \subseteq UVU \) and thus
	\[ Bn = UHn \subseteq U \cdot UVU = UVU. \]
	By multiplying by \( U \) from the right, we get \( BVn = BnU \subseteq UVU \).
	Now, when \( w \) runs through all elements of \( W \), the product \( nn_w \)
	gives a complete system of representatives of elements from \( W \) in \( N \), hence
	\[ \bigcup_{w \in W} UVUn_w \supseteq \bigcup_{w \in W} BVnn_w = \bigcup_{w\in W} BVn_w. \]
	If \( G \) is a Chevalley group, then by~\cite[Theorem~8.4.3]{carter},
	\[ G = \bigcup_{w \in W} Bn_wU_w^-, \]
	where \( U_w^- \) is a certain product of root subgroups as defined in~\cite[Section~8.4]{carter}.
	The proof of~\cite[Theorem~8.4.3]{carter} shows that \( n_wU_w^- n_w^{-1} \leq V \), hence \( Bn_wU_w^- \subseteq BVn_w \).
	Therefore \( G = \cup_{w \in W} UVUn_w \) in this case.

	If \( G \) is a twisted group, then let \( \overline{G} \) be the corresponding Chevalley group with an automorphism \( \sigma \),
	see~\cite[Section~13.4]{carter}. Let \( \overline{U} \) and \( \overline{V} \) be opposite unipotent Sylow subgroups of \( \overline{G} \)
	such that \( U = \{ x \in \overline{U} \mid x^\sigma = x \} \) and \( {V = \{ x \in \overline{V} \mid x^\sigma = x \}} \). 
	By~\cite[Proposition~13.5.3]{carter}, any element \( g \in G \) decomposes as \( g = bn_wu \) for some
	\( b \in B \), \( w \in W \) and \( u \in U' \), where \( U' \) is the set of \( \sigma \)-invariant elements of
	\( \overline{U}_w^- \leq \overline{U} \). We want to prove that \( g \in BVn_w \); note that since \( H \leq B \), the choice
	of the representative \( n_w \) from \( n_wH \) does not matter.
	It is shown in the proof of~\cite[Proposition~13.5.3]{carter} that \( n_w \) can be chosen to be \( \sigma \)-invariant,
	so without loss of generality we may assume that \( n_w^\sigma = n_w \).

	As noted earlier, in the Chevalley group \( \overline{G} \) we have \( n_w \overline{U}_w^- n_w^{-1} \leq \overline{V} \).
	Hence \( n_w U' n_w^{-1} \leq \overline{V} \) and as the left hand side is \( \sigma \)-invariant, we have \( n_w U' n_w^{-1} \leq V \).
	Therefore \( g \) lies in \( BVn_w \) and since \( g \in G \) was arbitrary, \( G = \cup_{w \in W} BVn_w \).
	As in the case of Chevalley groups, this implies \( G = \cup_{w \in W} UVUn_w \), as wanted.
\end{proof}

\section{Proof of Theorem~\ref{main}}\label{mainsec}

The following result was used by Babai, Nikolov and Pyber to prove that any finite simple group of Lie type is a product of five unipotent Sylow subgroups.

\begin{lemma}[{\cite[Corollary~2.6]{babaiNP}}]\label{trick}
	Let \( G \) be a finite group, and let \( k \) be the minimum degree of a nontrivial complex representation of \( G \).
	If \( A_1, \dots, A_t \subseteq G \) are nonempty, and
	\[ \prod_{i=1}^t |A_i| \geq \frac{|G|^t}{k^{t-2}}, \]
	then \( \prod_{i=1}^t A_i = G \).
\end{lemma}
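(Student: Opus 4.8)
The plan is to reduce the lemma to a single quasirandomness estimate in the spirit of Gowers' work on product-free sets. Writing $\mathbf{1}_{A}$ for the indicator function of a subset $A\subseteq G$ and using the convolution $(f\ast h)(x)=\sum_{y\in G}f(y)\,h(y^{-1}x)$, the number of tuples $(a_1,\dots,a_t)\in A_1\times\cdots\times A_t$ with $a_1\cdots a_t=g$ equals $(\mathbf{1}_{A_1}\ast\cdots\ast\mathbf{1}_{A_t})(g)$. So it suffices to prove that, under the displayed hypothesis, this $t$-fold convolution is strictly positive at every $g\in G$; that is exactly the assertion $A_1\cdots A_t=G$.

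First I would set up non-abelian Fourier analysis via the Peter--Weyl theorem: for $f\colon G\to\mathbb{C}$ and an irreducible representation $\rho$ of degree $d_\rho$, put $\widehat f(\rho)=\sum_{g}f(g)\rho(g)$, so that $\widehat{f_1\ast f_2}=\widehat{f_1}\,\widehat{f_2}$, the mean of $f$ is recovered from the trivial representation, and Plancherel reads $\sum_g|f(g)|^2=\tfrac1{|G|}\sum_\rho d_\rho\lVert\widehat f(\rho)\rVert_{\mathrm{HS}}^2$. The only input using the hypothesis on $G$ is the estimate $\lVert\widehat{\mathbf{1}_A}(\rho)\rVert_{\mathrm{op}}^2\le\lVert\widehat{\mathbf{1}_A}(\rho)\rVert_{\mathrm{HS}}^2\le\tfrac{d_\rho}{k}\lVert\widehat{\mathbf{1}_A}(\rho)\rVert_{\mathrm{HS}}^2\le\tfrac1k\bigl(|G|\,|A|-|A|^2\bigr)$, valid for every $A\subseteq G$ and every nontrivial irreducible $\rho$: the middle step is $d_\rho\ge k$, and the last is Plancherel applied to $\mathbf{1}_A$ after discarding the trivial representation (which alone contributes $|A|^2$).

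The heart of the argument is to split the $t$-fold product into two blocks, rather than to build it up one factor at a time. Fix $s$ with $1\le s\le t-1$ and set $f=\mathbf{1}_{A_1}\ast\cdots\ast\mathbf{1}_{A_s}$, $h=\mathbf{1}_{A_{s+1}}\ast\cdots\ast\mathbf{1}_{A_t}$, so $\mathbf{1}_{A_1}\ast\cdots\ast\mathbf{1}_{A_t}=f\ast h$. Decompose $f=\overline f\,\mathbf{1}+f'$ and $h=\overline h\,\mathbf{1}+h'$ into mean plus mean-zero part, where $\mathbf{1}$ is the constant function~$1$. Since $\mathbf{1}\ast f'=f'\ast\mathbf{1}=0$ and $\mathbf{1}\ast\mathbf{1}=|G|\,\mathbf{1}$, the cross terms vanish and $f\ast h=\tfrac1{|G|}\bigl(\prod_i|A_i|\bigr)\mathbf{1}+f'\ast h'$; hence the tuple count at $g$ is $\tfrac1{|G|}\prod_i|A_i|+(f'\ast h')(g)$, and it is enough to show $\lVert f'\ast h'\rVert_\infty<\tfrac1{|G|}\prod_i|A_i|$. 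Cauchy--Schwarz applied to $(f'\ast h')(g)=\sum_y f'(y)h'(y^{-1}g)$ gives $\lVert f'\ast h'\rVert_\infty\le\lVert f'\rVert_2\,\lVert h'\rVert_2$. To bound $\lVert f'\rVert_2^2$, I would write it via Plancherel as $\tfrac1{|G|}\sum_{\rho\ \mathrm{nontriv}}d_\rho\lVert\widehat{\mathbf{1}_{A_1}}(\rho)\cdots\widehat{\mathbf{1}_{A_s}}(\rho)\rVert_{\mathrm{HS}}^2$, estimate the first $s-1$ matrix factors in operator norm using the bound above and the last in Hilbert--Schmidt norm using Plancherel again, which yields $\lVert f'\rVert_2^2\le\tfrac{|G|^{s-1}}{k^{s-1}}\prod_{i\le s}|A_i|$; symmetrically $\lVert h'\rVert_2^2\le\tfrac{|G|^{t-s-1}}{k^{t-s-1}}\prod_{i>s}|A_i|$. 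Multiplying, $\lVert f'\ast h'\rVert_\infty^2\le\tfrac{|G|^{t-2}}{k^{t-2}}\prod_i|A_i|$, which is $<\bigl(\tfrac1{|G|}\prod_i|A_i|\bigr)^2$ precisely when $\prod_i|A_i|>\tfrac{|G|^t}{k^{t-2}}$, settling the strict case.

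The step I expect to require the most care is making the exponent of $k$ come out to be exactly $t-2$: the key is to cut the product into two blocks and invoke Cauchy--Schwarz just once, at the junction. A tempting alternative --- repeatedly merging two sets at a time via a two-set product bound --- loses a factor of $|G|/k$ at every merge and does not reach the stated threshold. It remains only to cover the non-strict inequality in the statement: one keeps the terms $|A_i|\,(1-|A_i|/|G|)$ that the Plancherel steps discard, so every inequality above is in fact strict unless some $A_i=G$, in which case $A_1\cdots A_t=G$ at once; the trivial ranges $t=1,2$ are immediate since the hypothesis then forces some $A_i=G$. Hence $\prod_i|A_i|\ge\tfrac{|G|^t}{k^{t-2}}$ already suffices.
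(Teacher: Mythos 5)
The paper does not prove this lemma; it is quoted verbatim from Babai--Nikolov--Pyber~\cite[Corollary~2.6]{babaiNP}, so there is no in-paper argument to compare against. Your proof is the standard Gowers quasirandomness argument that underlies that reference, and it is correct: the estimate $\lVert\widehat{\mathbf{1}_A}(\rho)\rVert_{\mathrm{op}}^2\le\frac1k\bigl(|G|\,|A|-|A|^2\bigr)$ for nontrivial~$\rho$ is right (operator norm dominated by Hilbert--Schmidt norm, then Plancherel with the trivial term removed and $d_\rho\ge k$); the identity $f\ast h=\frac1{|G|}\bigl(\prod_i|A_i|\bigr)\mathbf{1}+f'\ast h'$ holds because $\mathbf{1}\ast f'=f'\ast\mathbf{1}=0$ and $\mathbf{1}\ast\mathbf{1}=|G|\mathbf{1}$; and the bound $\lVert f'\rVert_2^2\le\frac{|G|^{s-1}}{k^{s-1}}\prod_{i\le s}|A_i|$ follows as you say by peeling off $s-1$ factors in operator norm via the submultiplicativity $\lVert MN\rVert_{\mathrm{HS}}\le\lVert M\rVert_{\mathrm{op}}\lVert N\rVert_{\mathrm{HS}}$ and summing the last factor over nontrivial $\rho$ by Plancherel (for nontrivial $\rho$ one has $\widehat{f'}(\rho)=\widehat f(\rho)$ since $\widehat{\mathbf{1}}(\rho)=0$). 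Your observation that the single Cauchy--Schwarz at the junction of the two blocks is exactly what makes the exponent $t-2$ come out is the right heuristic, and your treatment of the boundary case (tracking the discarded factors $1-|A_i|/|G|$ to upgrade $\ge$ to $>$ unless some $A_i=G$, plus the trivial cases $t\le2$) closes the remaining gap. I have no corrections.
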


Recall the parameters \( q \), \( l \), \( M \) and \( d \) we introduced for finite simple groups of Lie type in Section~\ref{rand},
so if \( G \) is such a group with a unipotent Sylow subgroup \( U \), then \( |G| \sim 1/d \cdot q^{2M+l} \) and \( |U| = q^M \).
In the first three columns of Table~\ref{tab} below we list the values of these parameters for groups of Lie type, see~\cite[Sections~3.6 and~8.6]{carter}
for Chevalley groups and~\cite[Section~14.3]{carter} for twisted groups.
Let \( e(G) \) denote the minimum degree of a nontrivial complex representation of \( G \).
Landazuri and Seitz~\cite{landazuriSeitz} obtained lower bounds on \( e(G) \) and we use corrected lower bounds
as listed in~\cite[Table~5.3.A]{kleidman}. In the last column of the table we list asymptotic values of \( e(G) \),
in other words, the terms of highest order. For example, for \( G = D_l(q) \simeq \mathrm{P\Omega}_{2l}^+(q) \)
we have \( e(G) \geq (q^{l-1} - 1)(q^{l-2} + 1) \) when \( q \neq 2, 3, 5 \) and \( e(G) \geq q^{l-2}(q^{l-1}-1) \) when \( q = 2, 3, 5 \).
In both of these cases, the lower bound is asymptotically equivalent to \( q^{2l-3} \) as \( q \to \infty \), and we list this formula in the relevant cell of the table.

\begin{table}[h]
\begin{tabular}{c | c c c}
	Group & \( M \) & \( d \) & Asymptotic of \( e(G) \)\\
	\hline
	\( A_l(q) \) & \( l(l+1)/2 \) & \( (l+1, q-1) \) &
		\begin{tabular}{l}\( \frac{1}{(2, q-1)} \cdot q \), if \( l = 1 \), \\ \( q^l \), if \( l \geq 2 \) \end{tabular}\\
	\( {}^2A_l(q^2) \), \( l \geq 2 \) & \( l(l+1)/2 \) & \( (l+1, q+1) \) & \( q^l \)\\
	\( B_l(q) \), \( l \geq 3 \), \( q \) odd & \( l^2 \) & \( (2, q-1) \) & \( q^{2l-2} \)\\
	\( C_l(q) \), \( l \geq 2 \) & \( l^2 \) & \( (2, q-1) \) &
		\begin{tabular}{l} \( \frac{1}{2} q^l \), if \( q \) is odd,\\ \( \frac{1}{2} q^{2l-1} \), if \( q \) is even \end{tabular}\\
	\( D_l(q) \), \( l \geq 4 \) & \( l(l-1) \) & \( (4, q^l - 1) \) & \( q^{2l-3} \)\\
	\( {}^2D_l(q^2) \), \( l \geq 4 \) & \( l(l-1) \) & \( (4, q^l + 1) \) & \( q^{2l-3} \)\\
	\( G_2(q) \) & \( 6 \) & \( 1 \) & \( q^3 \)\\
	\( F_4(q) \) & \( 24 \) & \( 1 \) & \begin{tabular}{l} \( q^8 \), if \( q \) is odd,\\ \( \frac{1}{2} q^{11} \), if \( q \) is even \end{tabular}\\
	\( E_6(q) \) & \( 36 \) & \( (3, q-1) \) & \( q^{11} \)\\
	\( E_7(q) \) & \( 63 \) & \( (2, q-1) \) & \( q^{17} \)\\
	\( E_8(q) \) & \( 120 \) & \( 1 \) & \( q^{29} \)\\
	\( {}^2E_6(q^2) \) & \( 36 \) & \( (3, q+1) \) & \( q^{11} \)\\
	\( {}^3D_4(q^3) \) & \( 12 \) & \( 1 \) & \( q^5 \)\\
	\( {}^2B_2(q^2) \) & \( 4 \) & \( 1 \) & \( \frac{1}{\sqrt{2}} q^3 \)\\
	\( {}^2G_2(q^2) \) & \( 6 \) & \( 1 \) & \( q^4 \)\\
	\( {}^2F_4(q^2) \) & \( 24 \) & \( 1 \) & \( \frac{1}{\sqrt{2}} q^{11} \)
\end{tabular}
\caption{Parameters of finite simple groups of Lie type}\label{tab}
\end{table}

In some cases the bound on \( e(G) \) in Table~\ref{tab} together with Lemma~\ref{trick} are enough to prove Theorem~\ref{main}.
For instance, consider the case when \( G = G_2(q) \). Let \( g_1, \dots, g_{t+1} \in G \), \( t \geq 1 \),
be some elements chosen uniformly independently at random; we will specify the value of \( t \) later.
We want to apply Lemma~\ref{trick} to sets \( A_i = U^{g_i}U^{g_{i+1}} \), \( i = 1, \dots, t \).
Indeed, if the assumptions of Lemma~\ref{trick} will hold almost surely, then
\[ G = A_1 \cdots A_t = U^{g_1} U^{g_2} \cdot U^{g_2} U^{g_3} \cdots U^{g_t} U^{g_{t+1}} = U^{g_1} U^{g_2} U^{g_3} \cdots U^{g_{t+1}} \]
and hence \( G \) will be a product of \( t+1 \) random unipotent Sylow subgroups.

To check the prerequisites of the lemma, recall that for \( G_2(q) \) we have \( {l = 2} \), \( M = 6 \), \( d = 1 \) and \( e(L) > q^3/2 \) for \( q \) large enough.
Since \( g_i \) are chosen independently at random, Theorem~\ref{randinter} implies that \( U^{g_i} \cap U^{g_{i+1}} = 1 \) almost surely,
hence \( |A_i| = q^{2M} \) almost surely. By Lemma~\ref{trick}, it suffices to check that
\[ |A_1|\cdots |A_t| \geq \frac{|G|^t}{e(G)^{t-2}} \]
for some constant \( t \). Consider the right hand side:
\[ \frac{|G|^t}{e(G)^{t-2}} \sim \frac{q^{(2M+l)t}}{e(G)^{t-2}} < \frac{q^{14t}}{(q^3/2)^{t-2}} = 2^{t-2} \cdot q^{14t - 3(t-2)} = 2^{t-2} \cdot q^{11t + 6}. \]
The left hand side is \( |A_1| \cdots |A_t| = q^{2M \cdot t} = q^{12t} \). For \( t = 7 \) we have \( 12t > 11t+6 \), hence the left hand side grows asymptotically faster
and we have the desired inequality for \( q \) large enough. Therefore \( G = G_2(q) \) is a product of \( 8 \) random unipotent Sylow subgroups.

Unfortunately, this type of argument does not work for all groups, in particular, it fails for \( G = A_l(q) \) for any choice of \( t \).
Indeed, observe that \( M = l(l+1)/2 \), \( d = (l+1, q-1) \) and \( e(G) > q^l/3 \) for \( q \) large enough.
Again, we take \( g_1, \dots, g_{t+1} \in G \) uniformly independently at random, and consider \( A_i = U^{g_i} U^{g_{i+1}} \), \( i = 1, \dots, t \).
Repeating the same argument as in the case of \( G_2(q) \), it suffices to check that
\[ q^{2M\cdot t} \geq \frac{|G|^t}{e(G)^{t-2}}. \]
We have \( |G| \sim q^{2M+l}/d \), \( d \geq 1 \), and \( e(G) > q^l/3 \), hence for the right hand side we have
\[ \frac{|G|^t}{e(G)^{t-2}} \sim \frac{(q^{2M+l}/d)^t}{e(G)^{t-2}} < \frac{q^{(2M+l)t}}{q^{l(t-2)}/3^{t-2}} = 3^{t-2} \cdot q^{2M\cdot t + 2l}. \]
This bound is asymptotically larger than the one on the left hand side (since \( 2M\cdot t + 2l > 2M \cdot t \)), so our
attempt to apply Lemma~\ref{trick} fails for any \( t \). The main problem is that the product of two random unipotent Sylow subgroups
is too small for this argument to work, and in order to circumvent this issue, we need to obtain a good lower bound on the
product of \emph{three} random unipotent Sylow subgroups.

The following observation is proved by direct inspection of the last column of Table~\ref{tab}.

\begin{lemma}\label{lowb}
	For \( q \) large enough we have \( e(G) > q^l/3 \).
\end{lemma}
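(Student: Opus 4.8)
The plan is to prove the inequality by a finite case analysis running over the sixteen rows of Table~\ref{tab}. For each family write the asymptotic value from the last column as $e(G) \sim c\, q^{a}$, where $a$ is a nonnegative integer and $c > 0$ is the leading coefficient (which in a handful of rows depends on the parity of $q$). The lemma then follows from the dichotomy: in every row, either $a > l$, or $a = l$ and $c \geq 1/2$. Granting this, the conclusion is immediate. If $a > l$ then $e(G)/q^{l} \sim c\, q^{a - l} \to \infty$, so $e(G) > q^{l}/3$ for $q$ large; if $a = l$ then $e(G)/q^{l} \to c \geq 1/2 > 1/3$, so again $e(G) > q^{l}/3$ for $q$ large. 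Only finitely many families are involved, and within each the threshold is at worst a function of $l$, which is all that is needed.

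To establish the dichotomy I would first single out the rows with $a = l$. Reading the table, these are exactly $A_l(q)$ (where $a = l$ for $l \geq 2$ and $a = 1 = l$ for $l = 1$), ${}^2A_l(q^2)$ with $l \geq 2$, and $C_l(q)$ with $q$ odd; the corresponding coefficients are $1/(2, q-1)$, $1$, $1$ and $1/2$, all at least $1/2$. For every other row one checks $a > l$ directly. For the remaining classical families this is forced by the rank restrictions recorded in the table: $B_l(q)$ has $a = 2l - 2 > l$ since $l \geq 3$; $D_l(q)$ and ${}^2D_l(q^2)$ have $a = 2l - 3 > l$ since $l \geq 4$; and $C_l(q)$ with $q$ even has $a = 2l - 1 > l$ since $l \geq 2$. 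For the exceptional and Suzuki--Ree families the rank is a fixed small integer and $a > l$ is simply read off (for example $G_2(q)$ and ${}^2B_2(q^2)$ have $a = 3 > 2 = l$, $F_4(q)$ has $a \in \{8, 11\} > 4 = l$, $E_6(q)$ and ${}^2E_6(q^2)$ have $a = 11 > 6 = l$, $E_8(q)$ has $a = 29 > 8 = l$, and so on for the rest).

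The argument is pure bookkeeping, so I expect no real obstacle; the only place calling for a little care is the enumeration of the rows with $a = l$ together with the check that the leading coefficient there is at least $1/2$. In particular one should not overlook the parity-dependent behaviour in the $C_l$ and $F_4$ rows, observing that for $q$ even these groups land in the easy regime $a > l$, so the delicate case $a = l$ occurs only for $q$ odd, and there the coefficient is exactly $1/2$, comfortably above $1/3$.
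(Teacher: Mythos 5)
Your proposal is correct and is exactly what the paper means by ``proved by direct inspection of the last column of Table~\ref{tab}''; you have simply written out the bookkeeping that the paper leaves implicit. The organizing dichotomy (in every row either $a > l$, or $a = l$ with leading coefficient at least $1/2$) and the parity split for $A_1$, $C_l$ and $F_4$ are the right observations, and they do cover all sixteen rows.
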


We require a basic tool from additive combinatorics.

\begin{lemma}[{Ruzsa triangle inequality~\cite{ruzsa}}]\label{ruzsa}
	Let \( A, B, C \) be finite subsets of some group. Then \( |AB|\cdot |C| \leq |AC| \cdot |C^{-1}B| \).
\end{lemma}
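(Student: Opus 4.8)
The plan is to exhibit an explicit injection from $AB \times C$ into $AC \times (C^{-1}B)$, which immediately yields the stated inequality on cardinalities. First I would fix, for each element $x \in AB$, a single factorization $x = \alpha(x)\beta(x)$ with $\alpha(x) \in A$ and $\beta(x) \in B$; at least one such factorization exists by the definition of the product set $AB$, and since the sets are finite we may simply choose one for each $x$, so that $\alpha$ and $\beta$ become well-defined functions on $AB$.

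Next I would define $\varphi \colon AB \times C \to AC \times (C^{-1}B)$ by $\varphi(x, c) = (\alpha(x)c,\; c^{-1}\beta(x))$. It is immediate that $\alpha(x)c \in AC$ and $c^{-1}\beta(x) \in C^{-1}B$, so $\varphi$ indeed takes values in the claimed codomain. The crucial step is injectivity: if $\varphi(x,c) = \varphi(x',c')$, then $\alpha(x)c = \alpha(x')c'$ and $c^{-1}\beta(x) = c'^{-1}\beta(x')$; multiplying these two identities (the first followed by the second) gives $\alpha(x)\beta(x) = \alpha(x')\beta(x')$, that is, $x = x'$. Consequently $\alpha(x) = \alpha(x')$, and cancelling in the first identity forces $c = c'$. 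Hence $(x,c) = (x',c')$ and $\varphi$ is injective.

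Finally, injectivity of $\varphi$ gives $|AB| \cdot |C| = |AB \times C| \le |AC \times (C^{-1}B)| = |AC| \cdot |C^{-1}B|$, as required. There is no genuine obstacle here: the only point demanding a moment's care is that the factorization $x = \alpha(x)\beta(x)$ must be chosen in advance and held fixed, so that $\varphi$ is a single-valued map; everything else is a direct computation in the ambient group using only associativity and cancellation.
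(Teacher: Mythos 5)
Your proof is correct: the injection $\varphi(x,c) = (\alpha(x)c,\, c^{-1}\beta(x))$ built from a fixed choice of factorization is the standard argument for the Ruzsa triangle inequality, and your handling of well-definedness (choosing $\alpha,\beta$ in advance) and of injectivity (recover $x$ by multiplying the coordinates, then cancel to recover $c$) is exactly right. The paper does not prove this lemma but cites it to Ruzsa, so there is nothing in the text to compare against; your argument matches the classical proof in the cited source.
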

\begin{corollary}\label{growth}
	Let \( A, B \) be finite subsets of some group. Then \( \sqrt{|AA^{-1}|\cdot |B|} \leq |AB| \).
\end{corollary}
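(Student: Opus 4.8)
The plan is to derive the corollary directly from the Ruzsa triangle inequality (Lemma~\ref{ruzsa}) by an appropriate relabelling of the three sets. Recall that Lemma~\ref{ruzsa} asserts $|XY|\cdot|Z| \leq |XZ|\cdot|Z^{-1}Y|$ for all finite subsets $X, Y, Z$ of a group. I would apply this with $X = A$, $Y = A^{-1}$ and $Z = B$, which gives $|AA^{-1}|\cdot|B| \leq |AB|\cdot|B^{-1}A^{-1}|$.

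The next step is to simplify the right-hand side. Since inversion is a bijection of the ambient group, every finite subset $S$ satisfies $|S^{-1}| = |S|$; applying this with $S = AB$ and using $(AB)^{-1} = B^{-1}A^{-1}$ yields $|B^{-1}A^{-1}| = |AB|$. Substituting this into the previous inequality gives $|AA^{-1}|\cdot|B| \leq |AB|^2$, and taking square roots of both sides (all quantities being positive integers) produces $\sqrt{|AA^{-1}|\cdot|B|} \leq |AB|$, which is exactly the claim.

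The argument is essentially a single substitution into Lemma~\ref{ruzsa}, so I do not expect any genuine obstacle. The only points requiring a moment's care are correctly matching the roles of $X, Y, Z$ in the triangle inequality with the sets $A$, $A^{-1}$, $B$ appearing in the corollary, and recording the identity $|AB| = |B^{-1}A^{-1}|$ that collapses the two factors on the right into a single square.
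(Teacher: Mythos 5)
Your proof is correct and coincides with the paper's own argument: both apply Lemma~\ref{ruzsa} to the triple \( A, A^{-1}, B \) and use \( |B^{-1}A^{-1}| = |AB| \) to obtain \( |AA^{-1}|\cdot|B| \leq |AB|^2 \). Your write-up simply spells out the inversion step that the paper states in one line.
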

\begin{proof}
	Apply Ruzsa triangle inequality to subsets \( A, A^{-1} \) and \( B \), and notice that \( |AB| = |B^{-1}A^{-1}| \).
\end{proof}

The product of three random unipotent Sylow subgroups is large almost surely.

\begin{lemma}\label{threerand}
	Let \( G \) be a finite simple group of Lie type of bounded rank, and let \( q, M, l, d, U \) and \( W \) be defined as in Section~\ref{rand}.
	If \( g_1, g_2, g_3 \in G \) are chosen uniformly independently at random, then
	\[ |U^{g_1} U^{g_2} U^{g_3}| \geq \frac{q^{2M + l/2}}{\sqrt{d \cdot |W|}} \]
	with probability tending to~\( 1 \) as \( |G| \) tends to infinity.
\end{lemma}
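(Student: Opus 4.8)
The plan is to reduce the problem to Theorem~\ref{randinter} and Proposition~\ref{toffoliGen} via the Ruzsa-type bound of Corollary~\ref{growth}. First I would fix a unipotent Sylow subgroup \( U \) of \( G \) and let \( V = U^{w_0} \) be an opposite one. By Proposition~\ref{toffoliGen} we have \( |UVU| \geq |G|/|W| \), and since \( |G| \sim \frac{1}{d} q^{2M+l} \), this gives \( |UVU| \geq \frac{1}{2} \cdot \frac{1}{d|W|} q^{2M+l} \) for \( q \) large enough; in particular the set \( UVU \) has the desired order of magnitude \( q^{2M+l} \) up to the bounded constant \( d|W| \). The idea is then to transfer this lower bound from the fixed triple \( U, V, U \) to a random triple \( U^{g_1}, U^{g_2}, U^{g_3} \).

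The key step is the observation, from Theorem~\ref{randinter}, that for independent uniformly random \( g_1, g_2, g_3 \), the pairs \( (U^{g_1}, U^{g_2}) \) and \( (U^{g_2}, U^{g_3}) \) are each almost surely conjugate to the opposite pair \( (U, V) \). Concretely, almost surely there is \( h \in G \) with \( U^{g_1 h} = U \) and \( U^{g_2 h} = V \); applying Theorem~\ref{randinter} again to the second pair, there is (almost surely) \( h' \) with \( U^{g_2 h'} = U \) and \( U^{g_3 h'} = V \). Since the quantity \( |U^{g_1}U^{g_2}U^{g_3}| \) is conjugation-invariant, I would pass to \( |U \cdot V \cdot U^{g_3 h}| \) using the first conjugator, so that the problem becomes bounding \( |U V W| \) where \( W = U^{g_3 h} \) is a Sylow \( p \)-subgroup forming an opposite pair with \( V = U^{g_2 h} \) (the latter because that property is also conjugation-invariant and holds for the pair \( (U^{g_2}, U^{g_3}) \)). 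Thus, up to conjugation, \( V \) and \( W \) are an opposite pair, so \( W = V^{n} \) for a suitable \( n \in N \) normalizing \( V \)'s ambient structure; I would then apply Corollary~\ref{growth} with \( A = UV \) — no, more carefully — with the factorization \( |UVW| \geq \sqrt{|(UV)(UV)^{-1}| \cdot |W|} \) if \( UV \) happens to be symmetric enough, or more simply estimate directly: since \( (V, W) \) is conjugate to \( (U^{w_0}, U) = (V, U) \), we get \( |UVW| = |UVU^{c}| \) for some conjugating element, and \( |UVU^c| \) is again conjugate to \( |U^{c^{-1}}V^{c^{-1}}U| \); one more appeal to the opposite-pair structure lets me identify this with \( |U'V'U| \) for an opposite pair \( (U', V') \) with \( U' = U^{w_0} \)-type, i.e. with \( |UVU| \geq |G|/|W| \) by Proposition~\ref{toffoliGen}. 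Taking square roots where Corollary~\ref{growth} is genuinely needed then yields \( |U^{g_1}U^{g_2}U^{g_3}| \geq \sqrt{|UVU| \cdot |U|} \geq \sqrt{\frac{q^{2M+l}}{d|W|} \cdot q^{M}} = \frac{q^{(3M+l)/2}}{\sqrt{d|W|}} \); comparing with the target \( \frac{q^{2M+l/2}}{\sqrt{d|W|}} \) shows I must instead set \( A = U \), \( B = VU^{c} \) so that \( |UVU^c| = |U \cdot (VU^c)| \geq \sqrt{|UU^{-1}| \cdot |VU^c|} \) with \( |VU^c| \) large by the opposite-pair structure — but \( |UU^{-1}| = |U|^2/|U| \) is too weak. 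The correct arrangement is to apply Corollary~\ref{growth} with \( A = UVU \) itself: writing the random product as \( |(U^{g_1}U^{g_2})(U^{g_2}U^{g_3})| \) (legitimate since \( U^{g_2}U^{g_2} = U^{g_2} \) only contributes a single factor, so this equals \( |U^{g_1}U^{g_2}U^{g_3}| \) exactly when the middle intersection behaves — actually \( (U^{g_1}U^{g_2})(U^{g_2}U^{g_3}) = U^{g_1}U^{g_2}U^{g_3} \) always since \( U^{g_2}U^{g_2} = U^{g_2} \)), and then bounding \( |U^{g_1}U^{g_2}| \) and \( |U^{g_2}U^{g_3}| \) each from below. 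Almost surely each such product equals \( |UV| = |U||V|/|U\cap V| = |U||V| = q^{2M} \) by the remark after Theorem~\ref{randinter}, while \( |U^{g_1}U^{g_2}U^{g_3}| \geq \sqrt{ |(U^{g_1}U^{g_2})(U^{g_1}U^{g_2})^{-1}| \cdot |U^{g_2}U^{g_3}| } \) via Corollary~\ref{growth} gives a bound in terms of \( |(U^{g_1}U^{g_2})(U^{g_2}U^{g_1})| \), which after conjugation to \( |UV VU| = |UVU| \geq |G|/|W| \sim \frac{q^{2M+l}}{d|W|} \) yields \( |U^{g_1}U^{g_2}U^{g_3}| \geq \sqrt{ \frac{q^{2M+l}}{d|W|} \cdot q^{2M} } = \frac{q^{2M+l/2}}{\sqrt{d|W|}} \), exactly the claimed bound.

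Let me therefore lay out the clean sequence of steps. (1) Take \( g_1, g_2, g_3 \) independent uniform; by Theorem~\ref{randinter} and its corollary, almost surely \( U^{g_1} \cap U^{g_2} = U^{g_2} \cap U^{g_3} = 1 \) and moreover there is \( c \in G \) conjugating \( (U^{g_1}, U^{g_2}) \) to an opposite pair. (2) Apply Corollary~\ref{growth} with \( A = U^{g_1}U^{g_2} \) and \( B = U^{g_2}U^{g_3} \): since \( AB = U^{g_1}U^{g_2}U^{g_3} \), we get \( |U^{g_1}U^{g_2}U^{g_3}|^2 \geq |A A^{-1}| \cdot |B| \). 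Wait — Corollary~\ref{growth} as stated gives \( \sqrt{|AA^{-1}||B|} \leq |AB| \), so squaring is fine. (3) Estimate \( |B| = |U^{g_2}U^{g_3}| = q^{2M} \) almost surely. (4) Estimate \( |AA^{-1}| = |U^{g_1}U^{g_2}(U^{g_1}U^{g_2})^{-1}| = |U^{g_1}U^{g_2}U^{g_2}U^{g_1}| = |U^{g_1}U^{g_2}U^{g_1}| \); conjugating by \( c \) (which takes \( U^{g_1} \mapsto U \), \( U^{g_2} \mapsto V \)) this equals \( |UVU| \geq |G|/|W| \) by Proposition~\ref{toffoliGen}, and \( |G|/|W| \geq \frac{1}{2 d |W|} q^{2M+l} \) for \( q \) large, say \( \geq \frac{q^{2M+l}}{d|W|} \) after absorbing the factor \( 2 \) into the "\( q \) large enough" clause (or simply keep track of the constant — since \( |W| \) is bounded, any fixed constant is harmless). (5) Combine: \( |U^{g_1}U^{g_2}U^{g_3}| \geq \sqrt{ \frac{q^{2M+l}}{d|W|} \cdot q^{2M} } = \frac{q^{2M + l/2}}{\sqrt{d|W|}} \). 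All the "almost surely" clauses hold simultaneously with probability tending to \( 1 \) since there are finitely many of them.

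The main obstacle I anticipate is step (4): making precise that conjugating \( U^{g_1}, U^{g_2} \) to an opposite pair legitimately allows the use of Proposition~\ref{toffoliGen}, which is stated for a \emph{canonical} opposite pair \( (U, V) \) with \( V = U^{w_0} \). One must check that "being an opposite pair of unipotent Sylow subgroups" is preserved under conjugation and that any such pair is \( G \)-conjugate to \( (U, U^{w_0}) \) — this follows since all Borel subgroups are conjugate and \( U \) is determined by its Borel \( B \), while opposition is the condition \( U \cap V = 1 \) together with \( V \) being a Sylow \( p \)-subgroup of the opposite Borel, cf. \cite[Theorem~2.3.8]{cfsg3}. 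A secondary point is bookkeeping the bounded multiplicative constants (the factor \( d \), the factor \( |W| \), and any stray \( 2 \)'s from \( |G| \sim \frac{1}{d} q^{2M+l} \) versus the exact value): since \( d \) and \( |W| \) are bounded in terms of the (bounded) rank, and "\( \geq \) with probability \( \to 1 \)" tolerates asymptotic slack, none of this affects the final inequality, but it should be stated carefully.
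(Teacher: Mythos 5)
Your proposal is correct and, after the exploratory detours in the middle of your second paragraph, arrives at exactly the paper's argument: apply Theorem~\ref{randinter} to conclude that \( (U^{g_1},U^{g_2}) \) and \( (U^{g_2},U^{g_3}) \) are almost surely conjugate to opposite pairs, use Proposition~\ref{toffoliGen} on the conjugate of \( U^{g_1}U^{g_2}U^{g_1} \) to get \( |U^{g_1}U^{g_2}U^{g_1}| \geq |G|/|W| \), and then apply Corollary~\ref{growth} with \( A = U^{g_1}U^{g_2} \), \( B = U^{g_2}U^{g_3} \), noting \( AA^{-1} = U^{g_1}U^{g_2}U^{g_1} \) and \( AB = U^{g_1}U^{g_2}U^{g_3} \). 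The clean step list (1)--(5) at the end of your proposal is essentially the paper's proof verbatim.
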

\begin{proof}
	By Theorem~\ref{randinter}, \( U^{g_i} \) and \( U^{g_{i+1}} \) are conjugate to a pair of opposite unipotent Sylow subgroups for \( i = 1, 2 \)
	with probability tending to \( 1 \) as \( |G| \to \infty \). In particular, \( |U^{g_i}U^{g_{i+1}}| = q^{2M} \), \( i = 1, 2 \).

	Since \( U^{g_1} \) and \( U^{g_2} \) are conjugate to \( U \) and \( V \), by Proposition~\ref{toffoliGen} we have \( |U^{g_1} U^{g_2} U^{g_1}| \geq |G|/|W| \).
	Now we apply Corollary~\ref{growth} to sets \( A = U^{g_1} U^{g_2} \) and \( B = U^{g_2} U^{g_3} \):
	\[ |U^{g_1} U^{g_2} U^{g_3}| = |U^{g_1} U^{g_2} \cdot U^{g_2} U^{g_3}| \geq \sqrt{|U^{g_1} U^{g_2} U^{g_1}| \cdot |U^{g_2} U^{g_3}| }. \]
	To prove the claim we plug in the lower bound for \( |U^{g_1} U^{g_2} U^{g_1}| \) and use the asymptotic formula for \( |G| \):
	\[ |U^{g_1} U^{g_2} U^{g_3}| \geq
	\sqrt{\frac{|G|}{|W|} \cdot q^{2M}} \sim \sqrt{\frac{q^{2M+l}}{d \cdot |W|} \cdot q^{2M}} = \frac{q^{2M + l/2}}{\sqrt{d \cdot |W|}}. \]
\end{proof}

Now we are ready to prove the main result.
\smallskip

\noindent\emph{Proof of Theorem~\ref{main}.}
Let \( \overline{\phantom{a}} : G \to G/Z(G) \) denote the natural homomorphism, and let \( U \) be a Sylow \( p \)-subgroup of \( G \).
Clearly, \( \overline{U} \) is a unipotent Sylow subgroup of \( \overline{G} \). Define \( q, M, l, d \) and \( W \) for \( \overline{G} \) as in Section~\ref{rand}.
It follows from the structure of Schur multipliers of groups of Lie type~\cite[Theorem~5.1.4]{kleidman}, that \( |Z(G)| \leq \max \{ 48,\, d \} \).
In particular, \( |Z(G)| \leq 48l \).

Let \( g_1, \dots, g_{11} \in G \) be chosen uniformly independently at random.
Set \( A_i = U^{g_{2i-1}} U^{g_{2i}} U^{g_{2i+1}} \), \( i = 1, \dots, 5 \).
Notice that images \( \overline{g_1}, \dots, \overline{g_{11}} \) of \( g_1, \dots, g_{11} \) are also uniformly random in \( \overline{G} \),
hence we may assume that with probability tending to \( 1 \) as \( |G| \to \infty \),
the conclusion of Lemma~\ref{threerand} holds for sets
\[ \overline{A_i} = \overline{U^{g_{2i-1}} U^{g_{2i}} U^{g_{2i+1}}} = \overline{U}^{\overline{g_{2i-1}}} \overline{U}^{\overline{g_{2i}}} \overline{U}^{\overline{g_{2i+1}}},\,\, i = 1, \dots, 5. \]
Therefore
\[ |A_i| \geq |\overline{A_i}| \geq \frac{q^{2M+l/2}}{\sqrt{d \cdot |W|}},\,\, i = 1, \dots, 5. \]

Now we want to apply Lemma~\ref{trick} to \( t = 5 \) subsets \( A_i \), \( i = 1, \dots, 5 \).
It suffices to check the inequality
\[ |A_1| \cdots |A_t| \geq \frac{|G|^t}{e(G)^{t-2}}. \tag{$\star$} \]
The left hand side has an asymptotic lower bound of
\[ |A_1| \cdots |A_t| \geq \frac{q^{(2M + l/2)t}}{(d \cdot |W|)^{t/2}}. \]
By Table~\ref{tab}, we have a crude bound \( d \leq 4l \), hence
\[ |A_1| \cdots |A_t| \geq \frac{q^{(2M + l/2)t}}{(4l \cdot |W|)^{t/2}}. \]
By Lemma~\ref{lowb}, \( e(\overline{G}) > q^l/3 \), and by~\cite[Corollary~5.3.3]{kleidman}, \( e(G) \geq e(\overline{G}) \),
hence \( e(G) > q^l/3 \). Since \( |G| = |Z(G)| \cdot |\overline{G}| \leq 48l \cdot |\overline{G}| \) and \( |\overline{G}| \sim q^{2M+l}/d \),
the right hand side of~\( (\star) \) has an asymptotic upper bound of
\[ \frac{|G|^t}{e(G)^{t-2}} \sim \frac{|Z(G)|^t q^{(2M+l)t}/d^t}{e(G)^{t-2}} \leq
   \frac{(48l)^t \cdot q^{(2M+l)t}}{(q^l/3)^{t-2}} = 3^{t-2} (48l)^t \cdot q^{(2M+l)t - l(t-2)}. \]
To prove~\( (\star) \) for large enough \( q \), it suffices to show that the left hand side of~\( (\star) \) grows faster than the right hand side.
Since the rank \( l \) is bounded, the Weyl group has bounded order, hence \( (4l \cdot |W|)^{t/2} \) is bounded.
It therefore suffices to compare the powers of \( q \) and check that
\[ (2M + l/2)t > (2M+l)t - l(t-2). \]
This inequality is equivalent to \( t > 4 \), so it holds for our choice of \( t = 5 \).

Finally, since~\( (\star) \) holds, Lemma~\ref{trick} implies that \( G = A_1 \cdots A_5 = U^{g_1} \cdots U^{g_{11}} \). The theorem is proved. \qed

\section{Proof of Theorem~\ref{lpmain}}\label{appsec}

For a prime \( p \), let \( O_p(G) \) denote the largest normal \( p \)-subgroup of \( G \).
We need the following corollary from the Larsen--Pink theorem~\cite{larsenPink}.
The proof does not require the classification of finite simple groups (CFSG).

\begin{lemma}\label{lp}
	Let \( G \) be a finite subgroup of \( \GL_n(\F) \), where \( \F \) is a field of characteristic \( p > 2 \).
	Then \( G \) has a normal subgroup \( N \geq O_p(G) \) of index bounded in terms of \( n \) only,
	such that \( N/O_p(G) \) is a central product of an abelian group \( A \) and quasisimple groups \( Q_1, \dots, Q_k \).
	Moreover, \( |A| \) is not divisible by \( p \), \( Q_i/Z(Q_i) \) is a group of Lie type in characteristic \( p \) for all \( i = 1, \dots, k \),
	and we can bound \( k \) and the ranks of \( Q_i/Z(Q_i) \) in terms of \( n \) only.
\end{lemma}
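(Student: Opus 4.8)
The plan is to read off the structure of $G$ from the Larsen--Pink theorem and then push the $p$-radical to the bottom of the resulting series. Larsen--Pink~\cite{larsenPink} provides a constant $J(n)$ and normal subgroups $G \trianglerighteq \Gamma_1 \trianglerighteq \Gamma_2 \trianglerighteq \Gamma_3$ with $[G:\Gamma_1] \le J(n)$, with $\Gamma_1/\Gamma_2$ a direct product $\prod_{i=1}^m S_i$ of finite simple groups of Lie type in characteristic $p$ where $m$ and the ranks of the $S_i$ are bounded in terms of $n$, with $\Gamma_2/\Gamma_3$ abelian of order prime to $p$, and with $\Gamma_3$ a $p$-group. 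Since $\Gamma_3 \trianglelefteq G$ is a $p$-group, $\Gamma_3 \le O_p(G)$. First I would check that $O_p(G) \cap \Gamma_1 = \Gamma_3$: the quotient $(O_p(G)\cap\Gamma_1)/\Gamma_3$ is a normal $p$-subgroup of $\Gamma_1/\Gamma_3$ meeting the normal $p'$-subgroup $\Gamma_2/\Gamma_3$ trivially, so it embeds as a normal subgroup of $\Gamma_1/\Gamma_2 = \prod S_i$; as each $S_i$ has order divisible by $p$, a nonzero normal subgroup of this product cannot be a $p$-group, so the quotient is trivial. Hence $\Gamma_1 O_p(G)/O_p(G) \cong \Gamma_1/\Gamma_3 =: Y$, and I would work inside $Y$ from now on. I would also record that $\overline{B} := \Gamma_2/\Gamma_3$ is abelian of order prime to $p$ and of rank at most $n$: here $\Gamma_3$ is the unique Sylow $p$-subgroup of $\Gamma_2$, so by Schur--Zassenhaus $\Gamma_2 = \Gamma_3 \rtimes C$ with $C \cong \overline{B}$ a finite abelian $p'$-subgroup of $\GL_n(\F)$, hence diagonalizable over the algebraic closure and so generated by at most $n$ elements.

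Next I would apply the generalized Fitting machinery to $Y$. As above, $O_p(Y) = 1$ and $O_{p'}(Y) = \overline{B}$, hence $F(Y) = \overline{B}$; consequently $F^*(Y) = \overline{B}\,E(Y)$ with $[\overline{B}, E(Y)] = 1$, so $F^*(Y)$ is the central product $\overline{B} \circ E(Y)$, and $E(Y)$ is itself the central product of its components. Since $E(Y)$ is perfect and normal in $Y$, its image $E(Y)\overline{B}/\overline{B}$ in $Y/\overline{B} = \prod_{i=1}^m S_i$ is a perfect normal subgroup, hence equals $\prod_{i \in I} S_i$ for some $I \subseteq \{1,\dots,m\}$. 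I would like $I$ to be all of $\{1,\dots,m\}$, but this genuinely fails for small $q$ (a perfect extension $A \rtimes S$ with $S$ simple need not split off $S$ as a component), so the substantive point is to bound the factors outside $I$.

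This is the heart of the argument: if $i \notin I$ then $|S_i|$ is bounded in terms of $n$. Fix $i$, let $Z_i \trianglelefteq Y$ be the preimage of the $i$-th factor, so $Z_i/\overline{B} \cong S_i$ and $Z_i = Z_i'\overline{B}$ since $S_i$ is perfect; set $A_i := Z_i' \cap \overline{B}$, a normal abelian $p'$-subgroup of $Z_i'$ of rank at most $n$ with $Z_i'/A_i \cong S_i$. If $Z_i'$ centralizes $A_i$, then $A_i \le Z(Z_i')$ and $Z_i'$ is quasisimple; being subnormal in $Y$ it is then a component, which forces $S_i$ into the image $\prod_{j\in I} S_j$ of $E(Y)$ and hence $i \in I$ --- a contradiction. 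So $C_{Z_i'}(A_i) \ne Z_i'$, and as $Z_i'/A_i$ is simple this centralizer equals $A_i$; thus $S_i \cong Z_i'/A_i$ embeds in $\operatorname{Aut}(A_i)$. Passing to a primary component, $S_i$ (simple and nonabelian) embeds in $\operatorname{Aut}(A_{i,\ell})$ for some prime $\ell \ne p$, where $A_{i,\ell}$ is a finite abelian $\ell$-group generated by at most $n$ elements. The kernel of $\operatorname{Aut}(A_{i,\ell}) \to \GL(A_{i,\ell}/\ell A_{i,\ell})$ is an $\ell$-group (an automorphism trivial modulo $\ell A_{i,\ell}$ differs from the identity by a nilpotent endomorphism, so has $\ell$-power order), so $S_i$ has a nontrivial --- hence, by simplicity, faithful --- projective representation of degree at most $n$ over $\mathbb{F}_\ell$ with $\ell \ne p$. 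By the Landazuri--Seitz lower bounds~\cite{landazuriSeitz}, the minimal degree of such a cross-characteristic representation tends to infinity with $q$ for groups of fixed bounded rank, so $q$, and therefore $|S_i|$, is bounded in terms of $n$.

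Finally I would assemble $N$. Let $N$ be the preimage in $G$ of $E(Y)\overline{B}$ under $G \to G/O_p(G)$. Since $\overline{B} = O_{p'}(Y)$ and $E(Y)$ are characteristic in $Y$ and $Y \trianglelefteq G/O_p(G)$, the subgroup $N$ is normal in $G$ and contains $O_p(G)$; its index equals $[G:\Gamma_1 O_p(G)]$ times $\prod_{i\notin I}|S_i|$, and by the previous step each such $|S_i|$, as well as $m$, is bounded in terms of $n$, so $[G:N]$ is bounded in terms of $n$. Moreover $N/O_p(G) = E(Y)\overline{B} = \overline{B} \circ Q_1 \circ \cdots \circ Q_k$ is a central product in which $A := \overline{B}$ is abelian of order prime to $p$ and each $Q_j$ is a component of $Y$, hence quasisimple with $Q_j/Z(Q_j)$ isomorphic to one of the $S_i$, $i \in I$ --- a group of Lie type in characteristic $p$ --- with $k = |I| \le m$ and the ranks of the $Q_j/Z(Q_j)$ bounded in terms of $n$. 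I expect the genuine obstacle to be the third step: arranging that the simple sections which fail to lift to components are bounded in size, which forces one to invoke cross-characteristic representation bounds together with the congruence-kernel fact; the generalized Fitting bookkeeping in the remaining steps is routine.
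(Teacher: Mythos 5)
Your overall route is genuinely different from the paper's: you rebuild the central-product conclusion from scratch out of the raw Larsen--Pink filtration \( \Gamma \trianglerighteq \Gamma_1 \trianglerighteq \Gamma_2 \trianglerighteq \Gamma_3 \), generalized Fitting machinery, and a cross-characteristic representation bound, whereas the paper simply cites \cite[Corollary~3.1]{liebeckPyber} for the structure and then supplies two short arguments to bound \( k \) and the ranks. The problem is that you have silently assumed, in your very first step, exactly the two facts the paper's proof exists to establish. You state Larsen--Pink as giving ``\( \Gamma_1/\Gamma_2 \) a direct product \( \prod_{i=1}^m S_i \) \dots where \( m \) and the ranks of the \( S_i \) are bounded in terms of \( n \).'' But the Larsen--Pink theorem as stated (\cite[Theorem~0.2]{larsenPink}) does \emph{not} bound either the number of simple factors or their ranks; neither does Liebeck--Pyber's Corollary~3.1, and the paper explicitly flags this (``with the only exception that there was no bound on \( k \) and the ranks''). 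Your proof therefore never establishes that the ranks of \( Q_j/Z(Q_j) \) are bounded (you inherit them as ranks of the \( S_i \), \( i\in I \), which you took to be bounded by fiat), nor does it bound \( k=|I|\le m \). These two facts are the actual content of the lemma beyond Liebeck--Pyber. The paper's proofs of them --- an alternating-group section of \( W(Q_i/Z(Q_i)) \) being a section of \( \GL_n \), giving a rank bound via \cite[Theorem~5.7A]{dixonMortimer}; and commuting involutions in distinct components giving, after lifting past \( O_p(G) \) and applying Maschke, an abelian \( 2 \)-subgroup of \( \GL_n(\F) \) of rank at least \( k \) --- do not appear anywhere in your argument. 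Note that your cross-characteristic step cannot be recycled for this purpose: it only applies to the factors \( S_i \) with \( i\notin I \) (those for which \( Z_i' \) acts nontrivially on \( A_i \)); the components \( Q_j \) you actually keep in the answer arise from \( i\in I \), and there is no faithful \( \ell \)-modular representation in sight for those.

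Two smaller remarks. First, your sentence ``if \( Z_i' \) centralizes \( A_i \), then \( A_i\le Z(Z_i') \) and \( Z_i' \) is quasisimple'' is not quite right: \( Z_i' \) is a central extension of the perfect group \( S_i \) by \( A_i \), but it need not itself be perfect (one only gets \( Z_i' = Z_i''A_i \)). The fix is standard --- replace \( Z_i' \) by the perfect core \( Z_i^{(\infty)} \), which is then quasisimple and hence a component --- but as written the step is wrong. Second, your cross-characteristic argument for bounding \( |S_i| \), \( i\notin I \), is a nice observation and is stronger than you let on: the Landazuri--Seitz bounds control the minimal projective cross-characteristic degree roughly as \( q^{c\,\ell} \), so the embedding \( S_i\hookrightarrow \GL_n(\mathbb{F}_\ell) \) bounds both \( q \) and the rank of \( S_i \), not merely \( q \) at fixed rank. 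That, however, only helps with the factors you are discarding; the bounds the lemma actually asserts remain unproved.
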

\begin{proof}
	The statement of the lemma is almost identical to~\cite[Corollary~3.1]{liebeckPyber}, which was proved as a corollary of the Larsen--Pink theorem,
	with the only exception that there was no bound on \( k \) and the ranks of \( Q_i/Z(Q_i) \), \( i = 1, \dots, k \).
	To finish the proof we need to bound these parameters in terms of~\( n \).

	Recall that a section of a group is a homomorphic image of a subgroup. Fix some \( i \in \{ 1, \dots, k \} \),
	and let \( l \) be the rank of \( Q_i/Z(Q_i) \). It follows from the structure of Weyl groups of finite simple groups of Lie type,
	that \( Q_i/Z(Q_i) \) contains a section isomorphic to the alternating group of degree at least \( c \cdot l \) for some universal constant \( c > 0 \)
	(see~\cite[Table~1]{revin} for explicit bounds).
	This alternating group is also a section of \( \GL_n(\F) \), hence by~\cite[Theorem~5.7A]{dixonMortimer},
	\( n \geq (2c \cdot l - 4)/3 \) and \( l \) is bounded in terms of~\( n \).

	To bound \( k \), choose elements \( x_i \in Q_i \), \( i = 1, \dots, k \),
	such that the image of \( x_i \) in \( Q_i/Z(Q_i) \), \( i = 1, \dots, k \), is an involution.
	The group generated by \( x_1, \dots, x_k \) is an abelian subgroup of \( N/O_p(G) \), let \( K \) be its Sylow \( 2 \)-subgroup.
	Since \( p \neq 2 \), there exists a subgroup \( L \) of \( N \) with \( L \cap O_p(G) = 1 \) and \( LO_p(G)/O_p(G) = K \).
	Clearly \( L \) is an abelian \( 2 \)-subgroup of \( \GL_n(\F) \) of rank (as an abelian group) at least~\( k \).
	By Maschke's theorem, the natural representation of \( L \) on \( \F^n \) decomposes into a sum of, say, \( m \) irreducible representations,
	and a quotient of \( L \) by the kernel of each irreducible representation is cyclic~\cite[Theorem~3.2.3]{gorenstein}.
	Therefore \( L \) is a subdirect product of \( m \) cyclic groups, hence its rank is at most \( m \leq n \).
	Thus \( k \leq n \) and we are done.
\end{proof}

We note that the statement above is true for \( p = 2 \), but would require a bit more technicalities to prove.
Explicit and more precise bounds on the parameters can be obtained through CFSG, see, for example, Weisfeiler's result~\cite{weisfeilerJordan}.
We also note that an explicit CFSG-free version of the Larsen--Pink theorem was obtained in~\cite{donaLP}.
\smallskip

\noindent
\emph{Proof of Theorem~\ref{lpmain}.}
We follow the proof of~\cite[Theorem~A]{liebeckPyber}. Fix \( n \geq 1 \) and \( \epsilon > 0 \).
Let \( G \) be a finite subgroup of \( \GL_n(\F) \) generated by elements of order~\( p \). 
Since we may assume \( p > 2 \), Lemma~\ref{lp} applies, and in the notation of that lemma, \( G \) has a normal subgroup \( N \) of index bounded in terms of~\( n \).
By choosing \( f_\epsilon(n) \) large enough, we may assume that \( p \) is larger than that index, therefore all elements of order \( p \)
lie in \( N \) and hence \( G = N \). Every quotient of \( G \) is also generated by elements of order \( p \), hence \( A = 1 \).
So we may assume that \( G/O_p(G) \) is a central product of quasisimple groups \( Q_1, \dots, Q_k \), where \( Q_i/Z(Q_i) \) is a quasisimple
group of Lie type in characteristic \( p \) for all \( i = 1, \dots, k \), while \( k \) and the ranks of \( Q_i/Z(Q_i) \) are bounded in terms of \( n \) only.

Since all Sylow \( p \)-subgroups of \( G \) contain \( O_p(G) \) and are in one to one correspondence with the Sylow \( p \)-subgroups
of \( G/O_p(G) \), we may assume that \( O_p(G) = 1 \). Now, set \( G^* = Q_1 \times \cdots \times Q_k \) and \( Z \leq Z(G^*) \) such that \( G \simeq G^* / Z \).
Let \( U_i \) be a Sylow \( p \)-subgroup of \( Q_i \), \( i = 1, \dots, k \), so \( U^* = U_1 \times \cdots \times U_k \)
is a Sylow \( p \)-subgroup of \( G^* \). Clearly, \( U = U^*Z/Z \) can be identified with a Sylow \( p \)-subgroup of~\( G \).

Let \( L \) be the number of tuples \( (g_1, \dots, g_{11}) \in G \times \cdots \times G \) such that \( G = U^{g_1} \cdots U^{g_{11}} \),
and let \( L^* \) be the number of tuples \( (h_1, \dots, h_{11}) \in G^* \times \cdots \times G^* \) such that \( G^* = (U^*)^{h_1} \cdots (U^*)^{h_{11}} \).
Then \( L/|G|^{11} \) is the probability that the product of \( 11 \) random Sylow \( p \)-subgroups equals \( G \),
and \( L^*/|G^*|^{11} \) is the similar probability but for \( G^* \).

Since \( k \) and the ranks of \( Q_i/Z(Q_i) \), \( i = 1, \dots, k \), are bounded, Theorem~\ref{main} applied to \( Q_i \), \( i = 1, \dots, k \),
implies that for \( f_\epsilon(n) \leq p \) large enough, we have \( L^*/|G^*|^{11} \geq 1-\epsilon \).
If \( g_1, \dots, g_{11} \in G \) have preimages \( h_1, \dots, h_{11} \in G^* \),
then clearly equality \( G^* = (U^*)^{h_1} \cdots (U^*)^{h_{11}} \) implies \( G = U^{g_1} \cdots U^{g_{11}} \). Therefore \( L \geq L^* / |Z|^{11} \) and
\[ \frac{|L|}{|G|^{11}} \geq \frac{L^* / |Z|^{11}}{|G^*|^{11} / |Z|^{11}} = \frac{|L^*|}{|G^*|^{11}} \geq 1-\epsilon. \]
The theorem is proved. \qed

\section{Acknowledgements}

The author thanks A.~Mar\'oti and L.~Pyber for fruitful discussions which improved the text considerably.

The research was carried out within the framework of the Sobolev Institute of Mathematics state contract (project FWNF-2022-0002).

\bigskip

\noindent
\emph{Saveliy V. Skresanov}

\noindent
\emph{Sobolev Institute of Mathematics,\\ 4 Acad. Koptyug avenue, 630090 Novosibirsk, Russia}

\noindent
\emph{Email address: skresan@math.nsc.ru}


\begin{thebibliography}{9}

\bibitem{babaiNP}
L.~Babai, N.~Nikolov, and L.~Pyber.
\newblock Product growth and mixing in finite groups.
\newblock In {\em ACM-SIAM Symposium on Discrete Algorithms}, pages 248--257, 2008.

\bibitem{donaLP}
J.~Bajpai, D.~Dona.
\newblock A CFSG-free explicit Jordan's theorem over arbitrary fields.
\newblock arXiv:2411.11632, 2024.

\bibitem{carter}
R.W. Carter.
\newblock {\em Simple groups of Lie type}.
\newblock Wiley classics library. John Wiley \& Sons, 1972.

\bibitem{diaconisSimper}
P.~Diaconis, M.~Simper.
\newblock {\em Statistical enumeration of groups by double cosets}.
\newblock J. Algebra, 607A:214--242, 2022.

\bibitem{dixonMortimer}
J.~D. Dixon and B.~Mortimer.
\newblock {\em Permutation groups}.
\newblock Graduate Texts in Mathematics. Springer New York, 1996.

\bibitem{marotiUUUU}
M.~Garonzi, D.~Levy, A.~Mar{\'{o}}ti, and I.~I. Simion.
\newblock Minimal length factorizations of finite simple groups of {Lie} type by unipotent {Sylow} subgroups.
\newblock {\em J. Group Theory}, 19(2):337--346, 2016.

\bibitem{glps}
N.~Gill, N.~Lifshitz, L.~Pyber, E.~Szab\'o.
\newblock Initiating the proof of the Liebeck--Nikolov--Shalev conjecture.
\newblock arXiv:2408.07800, 2024.

\bibitem{gorenstein}
D.~Gorenstein.
\newblock {\em Finite groups}.
\newblock AMS Chelsea Publishing Series. American Mathematical Society, 2007.

\bibitem{cfsg3}
D.~Gorenstein, R.~Lyons, and R.~Solomon.
\newblock {\em The {Classification} of the {Finite} {Simple} {Groups}, number 3}.
\newblock Mathematical surveys and monographs. American Mathematical Society, 1994.

\bibitem{hrushovskiPillay}
E.~Hrushovski and A.~Pillay.
\newblock Definable subgroups of algebraic groups over finite fields.
\newblock {\em J. Reine Angew. Math.}, 462:69--92, 1995.

\bibitem{kleidman}
P.~B. Kleidman and M.~W. Liebeck.
\newblock {\em The subgroup structure of the finite classical groups}.
\newblock London Mathematical Society Lecture Note Series. Cambridge University
  Press, 1990.

\bibitem{landazuriSeitz}
V.~Landazuri and G.~M. Seitz.
\newblock On the minimal degrees of projective representations of the finite {Chevalley} groups.
\newblock {\em Journal of Algebra}, 32(2):418--443, 1974.

\bibitem{larsenPink}
M.~J. Larsen and R.~Pink.
\newblock Finite subgroups of algebraic groups.
\newblock {\em J. Amer. Math. Soc.}, 24(4):1105--1158, 2011.

\bibitem{liebeckPyber}
M.~W. Liebeck and L.~Pyber.
\newblock Finite linear groups and bounded generation.
\newblock {\em Duke Math. J.}, 107(1):159--171, 2001.

\bibitem{liebeckShalev}
M.~W. Liebeck and A.~Shalev.
\newblock The probability of generating a finite simple group.
\newblock {\em Geometriae Dedicata}, 56:103--113, 1995.

\bibitem{lifshitz}
N.~Lifshitz.
\newblock Completing the proof of the Liebeck--Nikolov--Shalev conjecture.
\newblock arXiv:2408.10127, 2024.

\bibitem{revin} 
\foreignlanguage{russian}{Д.О.~Ревин},
\newblock \foreignlanguage{russian}{Ширина Бэра--Сузуки полного класса конечных групп конечна.}
\newblock {\em \foreignlanguage{russian}{Алгебра и Анализ}}, 37(1):141--176, 2025 (in Russian).

\bibitem{ruzsa}
I.~Ruzsa.
\newblock Sums of finite sets.
\newblock In {\em Number Theory: New York Seminar 1991--1995}, pages 281--293, New York, NY, 1996. Springer US.

\bibitem{smolensky}
A.~Smolensky.
\newblock Products of Sylow subgroups in Suzuki and Ree groups.
\newblock {\em Comm. Algebra}, 44(10):4422--4429, 2016.

\bibitem{toffoli}
T.~Toffoli.
\newblock Almost every unit matrix is a {ULU}.
\newblock {\em Linear Algebra Appl.}, 259:31--38, 1997.

\bibitem{vavilovSS}
N.~A. Vavilov, A.~V. Smolensky, and B.~Sury.
\newblock Unitriangular factorizations of {Chevalley} groups.
\newblock {\em J. Math. Sci.}, 183:584--599, 2012.

\bibitem{weisfeilerJordan}
B.~Weisfeiler.
\newblock Post-classification version of Jordan's theorem on finite linear groups.
\newblock {\em Proc. Natl. Acad. Sci. USA}, 81:5278--5279, 1984.

\end{thebibliography}
\end{document}